\newtheorem{theorem}{Theorem}
\newtheorem{lemma}[theorem]{Lemma}
\newtheorem{proposition}[theorem]{Proposition}
\newtheorem{corollary}[theorem]{Corollary}
\theoremstyle{definition}
\theoremstyle{remark}
\newcommand{\newword}[1]{\textbf{\emph{#1}}}
\newcommand{\Sym}{\mathfrak{S}}
\newcommand{\GL}{\mathrm{GL}}
\newcommand{\CC}{\mathbb{C}}
\renewcommand{\SS}{\mathbb{S}}
\newcommand{\Wh}{\mathrm{Lie}}
\newcommand{\Sp}{\mathrm{Sp}}
\newcommand{\ohat}{\hat{0}}
\newcommand{\Wr}{\mathrm{Wr}}
\newcommand{\gbar}{\bar{g}}
\newcommand{\sdag}{s^{\dagger}}
\newcommand{\Rep}{\mathrm{Rep}}
\newcommand{\Ind}{\mathrm{Ind}}
\newcommand{\Res}{\mathrm{Res}}
\newcommand{\ch}{\mathrm{ch}}
\newcommand{\Fine}{\mathrm{Fine}}
\newcommand{\Coarse}{\mathrm{Coarse}}
\newcommand{\Shape}{\mathrm{Shape}}
\newcommand{\DSh}{D_{\mathrm{Sh}}} 
\newcommand{\One}{\mathbbm{1}}
\newcommand{\Tr}{\mathrm{Tr}}
\newcommand{\mmu}{\mathfrak{m}} 
\newcommand{\meq}{\mmu_{\mathrm{eq}}}
\begin{document}


\title[Spechts are alternating in Schurs]{Specht modules decompose as alternating sums of restrictions of Schur modules}  

\author[S. H. Assaf]{Sami H. Assaf}
\address{Department of Mathematics, University of Southern California, 3620 S. Vermont Ave., Los Angeles, CA 90089-2532, U.S.A.}
\email{shassaf@usc.edu}
\thanks{SHA supported by NSF DMS-1763336; DES supported by NSF DMS-1600223.}

\author[D. E. Speyer]{David E. Speyer}
\address{Department of Mathematics, University of Michigan, 530 Church St., Ann Arbor, MI 28109-1043, U.S.A.}
\email{speyer@umich.edu}

\subjclass[2010]{Primary 20C15; Secondary 20C30, 05E05, 05E10}



\keywords{Schur modules, Specht modules, plethysm, Kronecker coefficients}

\begin{abstract}
  Schur modules give the irreducible polynomial representations of the general linear group $\mathrm{GL}_t$. Viewing the symmetric group $\mathfrak{S}_t$ as a subgroup of $\mathrm{GL}_t$, we may restrict Schur modules to $\mathfrak{S}_t$ and decompose the result into a direct sum of Specht modules, the irreducible representations of $\mathfrak{S}_t$. We give an equivariant M\"{o}bius inversion formula that we use to invert this expansion in the representation ring for $\mathfrak{S}_t$ for $t$ large. In addition to explicit formulas in terms of plethysms, we show the coefficients that appear alternate in sign by degree. In particular, this allows us to define a new basis of symmetric functions whose structure constants are stable Kronecker coefficients and which expand with alternating signs into the Schur basis.
\end{abstract}

\maketitle

%
\section{Overview of main results}
%
\label{sec:introduction}

This paper concerns the relation between the representation theories of the general linear group $\GL_t$ and the symmetric group $\Sym_t$ over $\CC$. To fix notation, for $\lambda$ an integer partition, let $\ell(\lambda)$ denote the \emph{length} of $\lambda$ (number of nonzero parts), and let $|\lambda|$ denote the \emph{size} of $\lambda$ (sum of the parts). Let $\SS_{\lambda}$ denote the \newword{Schur functor}, so that the irreducible polynomial representations of $\GL_t$ are $\SS_{\lambda}(\CC^t)$ where $\ell(\lambda) \leq t$. For $\nu$ an integer partition, let $\Sp_{\nu}$ be the \newword{Specht module} over $\CC$, so that the irreducible representations of $\Sym_t$ are $\Sp_{\nu}$ for $|\nu|=t$.

Since $\Sym_t \subset \GL_t$, we can restrict the $\GL_t$ representation $\SS_{\lambda} (\CC^t)$ to $\Sym_t$ and decompose the result into Specht modules. For a partition $\nu=(\nu_1, \nu_2, \ldots, \nu_r)$, and $t \geq \nu_1 + |\nu|$, we define $\nu^{(t)}$ to be the partition $(t-|\nu|, \nu_1, \nu_2, \ldots, \nu_r)$ of $t$. Using this notation, we can write the aforementioned restriction as
\begin{equation}
  \Res_{\Sym_t}^{\GL_t} \SS_{\lambda}(\CC^t) \cong \bigoplus_{\nu} \Sp_{\nu^{(t)}}^{\oplus a_{\lambda}^{\nu}(t)} ,
  \label{e:restrict-mod}
\end{equation}
where $a_{\lambda}^{\nu}(t)$ are, by definition, the non-negative multiplicities that arise. In other words, in the representation ring $\Rep(\Sym_t)$ of $\Sym_t$, we have 
\begin{equation}
  [\SS_{\lambda} (\CC^t)] = \sum_{\nu} a^{\nu}_{\lambda}(t) [\Sp_{\nu^{(t)}}] .
  \label{e:restrict}
\end{equation}

A classical result of Littlewood \cite{Lit35} states that $a_{\lambda}^{\nu}(t)$ is independent of $t$ for $t$ sufficiently large. Therefore we may define coefficients $a_{\lambda}^{\nu}$ by
\begin{equation}
  a_{\lambda}^{\nu} = \lim_{t \rightarrow \infty} a_{\lambda}^{\nu}(t) .
  \label{e:restrict-stable}
\end{equation}
For $\lambda,\nu$ partitions with $|\lambda| \leq |\nu|$, Littlewood showed $a_{\lambda}^{\nu} = \delta_{\lambda,\nu}$. In particular, we may regard $a_{\lambda}^{\nu}$ as entries of an infinite upper uni-triangular matrix with rows and columns indexed by partitions. It is natural to invert this matrix to define coefficients $b_{\lambda}^{\nu}$ by
\begin{equation}
  \left[ b_{\lambda}^{\nu} \right] = \left[ a_{\lambda}^{\nu} \right]^{-1} .
  \label{e:invert-stable}
\end{equation}
While the $a_{\lambda}^{\nu}$ are non-negative, the $b_{\lambda}^{\nu}$ are, a priori, merely integers. Our main result is the following.

\begin{theorem} \label{thm:MT}
  With the above notation, we have $(-1)^{|\lambda| - |\nu|} b_{\lambda}^{\nu} \geq 0$. 
\end{theorem}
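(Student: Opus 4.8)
The plan is to recognize the forward expansion \eqref{e:restrict} as an application of an ``exponential'' functor — the one whose combinatorics is summation over set partitions — and then to invert it by an \emph{equivariant} form of M\"obius inversion over the partition lattices $\Pi_n$, whose M\"obius coefficients carry a single, predictable sign on each interval.

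First I would put the forward map into plethystic form. The trace of a permutation $\sigma\in\Sym_t$ of cycle type $\mu$ acting on $\SS_\lambda(\CC^t)$ is $s_\lambda$ evaluated at the multiset of eigenvalues of a permutation matrix of type $\mu$, namely, for each part $\mu_i$, the full set of $\mu_i$-th roots of unity. Packaging this over all $t$, Littlewood's stability theorem \cite{Lit35} becomes the statement that the passage from Specht characters to restricted Schur characters is realized on symmetric functions by a fixed operator built from multiplication by $\sum_{n\ge 0}h_n=\exp\!\big(\sum_{d\ge 1}p_d/d\big)$ together with a plethystic substitution. The combinatorial content is that this operator is ``sum over set partitions'': $\SS_\lambda(\CC^t)$, sitting inside $(\CC^t)^{\otimes |\lambda|}$, decomposes along the orbits of $\Sym_t$ on $\mathrm{Hom}([|\lambda|],[t])$, which are indexed by $\Pi_{|\lambda|}$, each block of a set partition contributing a diagonal copy of $\CC^t\cong\Sp_{\emptyset^{(t)}}\oplus\Sp_{(1)^{(t)}}$ whose many copies are combined by stable Kronecker products. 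This exhibits $a_\lambda^\nu$ as a nonnegative sum of products of plethysm and stable Kronecker coefficients, and — more importantly for us — realizes \eqref{e:restrict} as a summation over the tower of partition lattices.

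Second I would develop the equivariant M\"obius inversion. Ordinary M\"obius inversion inverts such an exponential by ``$\log$'', which over $\Pi_n$ is the weighted sum with coefficients $\mmu_{\Pi_n}(\ohat,\hat 1)=(-1)^{n-1}(n-1)!$, and on a general interval $[\pi,\rho]\cong\prod_i\Pi_{k_i}$ has sign $(-1)^{\sum_i(k_i-1)}$. Because the objects being summed are genuine representations, I would replace the integer $\mmu$ by the equivariant M\"obius function $\meq$, the reduced equivariant Euler characteristic of the order complex of the open interval, valued in the representation ring; closing the loop requires the formal poset-algebra identity that $\meq$ inverts the exponential equivariantly, which is most cleanly phrased in the species/operadic language where $\meq$ records the homology of $\Pi_n$ — a sign-twist of $\Wh$ — via the Koszul self-duality encoded by $\DT$. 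Feeding the expansion of the first step through this inversion gives explicit plethystic formulas for $b_\lambda^\nu$, hence for $[\Sp_{\lambda^{(t)}}]=\sum_\nu b_\lambda^\nu[\SS_\nu(\CC^t)]$, with $(-1)^{|\lambda|-|\nu|}b_\lambda^\nu$ written as a sum of contributions coming from the pieces of $\meq$.

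The crux — and the expected main obstacle — is then the \emph{sign-purity} of $\meq$: on each interval $[\pi,\rho]\cong\prod_i\Pi_{k_i}$ the virtual representation $\meq(\pi,\rho)$ must equal $(-1)^{\sum_i(k_i-1)}$ times a genuine (nonnegative) representation, namely an induction product of the sign-twisted Lie representations $\Wh(k_i)$. This is precisely the Cohen--Macaulayness of the partition lattice: the reduced homology of the order complex of $(\ohat,\hat 1)$ in $\Pi_n$ is concentrated in top degree, with $\Sym_n$ acting by the sign-twisted Lie representation. Granting sign-purity, every term of $(-1)^{|\lambda|-|\nu|}b_\lambda^\nu$ becomes a genuine nonnegative quantity (a dimension of an honest representation, equivalently a nonnegative combination of plethysm coefficients), provided one also verifies the bookkeeping identity matching the poset grading $\sum_i(k_i-1)$ to the size difference $|\lambda|-|\nu|$. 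I expect the real work to lie in coordinating these three things simultaneously — the nonnegativity of the forward expansion, the sign-purity of $\meq$, and the grading match — through the $\Fine$/$\Coarse$/$\Shape$ dictionary for set partitions; once they are aligned, Theorem~\ref{thm:MT} follows.
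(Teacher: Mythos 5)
Your proposal follows essentially the same route as the paper: equivariant M\"obius inversion over the partition lattice, with the Cohen--Macaulay property of $\Pi_n$ (homology concentrated in a single top degree, carrying the sign-twisted Lie/Whitehouse representation) supplying the sign-purity that makes the alternating pattern work. The one organizational device you do not name, but which the paper uses to make the sign bookkeeping transparent, is the intermediate family $M_\mu^t = \Ind_{\Sym_{|\mu|}\times\Sym_{t-|\mu|}}^{\Sym_t}(\Sp_\mu\boxtimes\One)$: the paper factors the transition $\Sp_{\nu^{(t)}}\leftrightarrow\SS_\lambda(\CC^t)$ through $M_\mu^t$, so that M\"obius inversion over $\Pi_{|\mu|}$ gives $M_\mu^t$ as a $(-1)^{|\mu|-|\lambda|}$-alternating combination of $\SS_\lambda(\CC^t)$, while a separate Pieri/vertical-strip inversion (inverting $[1+h_1]$ to $[-1+h_1]$) gives $\Sp_{\nu^{(t)}}$ as a $(-1)^{|\nu|-|\mu|}$-alternating combination of $M_\mu^t$, and the product is $(-1)^{|\nu|-|\lambda|}$. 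The two verifications you flag as remaining are exactly what the paper supplies: the equivariant M\"obius inversion theorem is proved by a direct trace computation (using that the fixed-point subposet $\{q\in P^g:\ohat\prec q\preceq q'\}$ has $q'$ as a cone point), and the grading match is the observation that on the relevant isotypic component only set partitions with $|\Shape(\pi)|=|\mu|$ and $\ell(\Shape(\pi))=|\lambda|$ contribute, so $|\mu|-|\lambda|$ is precisely the rank in $\Pi_m$.
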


As we explain in Section~\ref{KronMotivation}, the $b_{\lambda}^{\nu}$ have recently become of interest as part of a strategy for computing stable Kronecker coefficients, so this basic result concerning their signs seems of importance.

\subsection{Plethystic formulas} \label{plethysm}

We can give a precise formula for $b_{\lambda}^{\nu}$ using the language of plethysm.
If $\psi:\GL_m\rightarrow\GL_n$ has character $g$ and $\phi:\GL_n\rightarrow\GL_p$ has character $f$, then $\phi \circ \psi:\GL_m\rightarrow\GL_p$ has character $f[g]$, the \newword{plethysm} of $f$ and $g$.
In terms of symmetric polynomials, if $g = \sum_{\alpha} g_{\alpha} x^{\alpha}$ is the monomial expansion, then $f[g]$ is $f(y_1,\ldots,y_t)$, where the $y_i$ are defined by the identity
\[ \prod \left( 1 + y_i q \right) = \prod_{\alpha} \left( 1 + x^{\alpha} q \right)^{g_{\alpha}} . \]
In other words, if the $g_{\alpha}$ are non-negative, then $x^{\alpha}$ occurs $g_{\alpha}$ times in the multiset $(y_1, \ldots, y_t)$. For more details on plethysm, see \cite{Lit40} and \cite[(I.8)]{Mac95}.

Littlewood \cite{Lit58} gave a formula for restriction from $\GL_t$ to $\Sym_t$ as the following plethysm
\begin{equation}
  a_{\lambda}^{\nu}(t) = \langle s_{\lambda}, s_{\nu^{(t)}}[1+h_1+h_2+\cdots] \rangle ,
  \label{e:littlewood}
\end{equation}
where $s_{\lambda}(x_1,\ldots,x_t) = \mathrm{char}(\SS_{\lambda}(\CC^t))$ is the \newword{Schur polynomial} corresponding to the irreducible character for $\GL_t$, $h_n = s_{(n)}$ is the complete homogeneous symmetric polynomial, and the inner product for characters, corresponding to the Hall inner product on symmetric polynomials, is determined by $\langle s_{\lambda} , s_{\mu} \rangle = \delta_{\lambda,\mu}$.

Define the \newword{Lyndon symmetric function} $L_m$ by
\begin{equation}
  L_m =\frac{1}{n}\sum_{d | m} \mu(d) p_d^{m/d},
  \label{e:lyndon}
\end{equation}
The Lyndon symmetric function is the character of the $\GL_t$ action on the degree $m$ part of the free Lie algebra on $\CC^t$ and is the Frobenius character of  $\left( \Ind_{C_m}^{\Sym_m} e^{2\pi i/m} \right)$ where $C_m$ is the cyclic subgroup of $\Sym_m$ generated by the $m$-cycle $(12\cdots m)$. 
Using $L_m$, we can give an explicit formula for $b_{\lambda}^{\nu}$:


\begin{theorem} \label{thm:MT-plethysm}
  For $\lambda$ and $\nu$ partitions, we have
  \begin{equation}
    b_{\lambda}^{\nu} = \sum_{\nu/\mu \ \text{vert. strip}} (-1)^{|\nu| - |\lambda|} \langle s_{\mu^{T}} , s_{\lambda^{T}} [L_1 + L_2 + L_3 + \cdots] \rangle ,
    \label{e:plethysm}
  \end{equation}
  where $\lambda^{T}$ denotes the transpose of the partition $\lambda$ and $L_m$ is the Lyndon symmetric function. In particular, $(-1)^{|\nu|-|\lambda|} b_{\lambda}^{\nu}$ is a nonnegative integer.
\end{theorem}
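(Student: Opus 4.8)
The plan is to invert Littlewood's plethysm \eqref{e:littlewood} directly in the degree-completed ring $\widehat{\Lambda}$ of symmetric functions, and then push the transpose through. Write $\sigma = 1 + h_1 + h_2 + \cdots$. By Littlewood's stability, for each partition $\nu$ the series $s_{\nu^{(t)}}[\sigma]$ converge degree by degree, as $t\to\infty$, to an element $G_\nu = \sum_\lambda a_\lambda^\nu s_\lambda \in\widehat\Lambda$ with lowest term $s_\nu$; since $\bigl[a_\lambda^\nu\bigr]$ is a row-finite unitriangular matrix this inverts, in $\widehat\Lambda$, to $s_\mu = \sum_\nu b_\nu^\mu G_\nu$. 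The key move is to apply the continuous ring endomorphism $(-)[\tau]$ of $\widehat\Lambda$, where $\tau$ is the plethystic (compositional) inverse of $h_1+h_2+\cdots$. Because $\sigma[\tau] = 1 + p_1$, associativity of plethysm gives
\[ G_\nu[\tau] = \lim_{t\to\infty} s_{\nu^{(t)}}[1+p_1] = \lim_{t\to\infty}\sum_{\nu^{(t)}/\rho\ \text{h.s.}} s_\rho = \sum_{\kappa/\nu\ \text{h.s.}} s_\kappa = s_\nu\cdot\sigma , \]
using $s_\alpha[1+p_1] = \sum_{\alpha/\rho\ \text{h.s.}} s_\rho$, the interlacing bijection $\{\rho : \nu^{(t)}/\rho\ \text{h.s.}\}\leftrightarrow\{\kappa : \kappa/\nu\ \text{h.s.},\ \kappa_1\le t-|\nu|\}$, and Pieri's rule. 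Hence $s_\mu[\tau] = \sigma\cdot\sum_\nu b_\nu^\mu s_\nu$, so $\sum_\nu b_\nu^\mu s_\nu = \sigma^{-1}s_\mu[\tau]$ with $\sigma^{-1}=\sum_{i\ge0}(-1)^i e_i$; extracting the $s_\nu$-coefficient through the adjoint of multiplication and Pieri yields
\[ b_\nu^\mu = \sum_{\nu/\rho\ \text{vert. strip}} (-1)^{|\nu|-|\rho|}\,\langle s_\rho,\ s_\mu[\tau]\rangle . \]

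It remains to identify $\tau$ and to transpose. I would prove $\tau = \sum_{m\ge1}(-1)^{m-1}\omega(L_m)$, where $\omega$ is the standard involution and $L_m$ is the Lyndon symmetric function \eqref{e:lyndon}: starting from the Poincar\'{e}--Birkhoff--Witt identity $\sigma[L_1+L_2+\cdots] = (1-p_1)^{-1}$ (the tensor algebra on $\CC^t$ being, as a graded $\GL_t$-module, the symmetric algebra on the free Lie algebra $\bigoplus_m\Wh_m(\CC^t)$), one gets $(h_1+h_2+\cdots)[L_1+L_2+\cdots] = p_1/(1-p_1)$, so substituting $p_1/(1+p_1)$ plethystically on the inside gives $\tau = (L_1+L_2+\cdots)[p_1/(1+p_1)]$, and a short power-sum computation (using $p_k[p_1/(1+p_1)] = p_k/(1+p_k)$ and M\"{o}bius inversion) rewrites this as $\sum_m(-1)^{m-1}\omega(L_m)$. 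Then, using the rule $\omega(f[g]) = (\omega^{\deg g}f)[\omega g]$ ($\omega$ if $\deg g$ is odd, the identity if even), the expansion of plethysm along the coproduct as a sum over tuples of shapes, and the transpose symmetry $c^{\kappa}_{\mu^{(1)},\mu^{(2)},\ldots} = c^{\kappa^T}_{(\mu^{(1)})^T,(\mu^{(2)})^T,\ldots}$ of Littlewood--Richardson coefficients, one checks the clean identity $\langle s_\rho, s_\mu[\tau]\rangle = (-1)^{|\rho|-|\mu|}\langle s_{\rho^T}, s_{\mu^T}[L_1+L_2+\cdots]\rangle$ for all $\rho,\mu$. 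Substituting it into the displayed formula for $b_\nu^\mu$ collapses the two signs into a single global factor and produces (after matching indices) the right-hand side of \eqref{e:plethysm}. The concluding positivity statement is then immediate: $s_{\mu^T}[L_1+L_2+\cdots]$ is the character of the Schur functor $\SS_{\mu^T}$ applied to the honest graded $\GL_t$-module $\bigoplus_m\Wh_m(\CC^t)$, hence Schur-positive, so each bracket $\langle s_{\rho^T}, s_{\mu^T}[L_1+L_2+\cdots]\rangle$ is a nonnegative integer and the overall sign pulls out of the vertical-strip sum.

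The part I expect to be most delicate is the identification of $\tau$ with $\sum_m(-1)^{m-1}\omega(L_m)$ together with the transpose bookkeeping: arranging that the degree-dependent signs produced by $\omega(f[g])$ and the vertical-strip sign $(-1)^{|\nu|-|\rho|}$ conspire into a single factor, rather than a degree-dependent mess, is where every sign must be checked twice. The rest is routine but must be done carefully: that the limits defining $G_\nu$ and the sums $\sum_\nu b_\nu^\mu G_\nu$ converge in $\widehat\Lambda$, that $(-)[\tau]$ is continuous, and that plethysm associativity is applied legitimately despite the constant term of $\sigma$. A more conceptual alternative --- likely the route needed for the companion statements on stable Kronecker coefficients --- is to package the passage $\nu\mapsto\nu^{(t)}$ together with the plethysm by $\sigma$ as a single order-theoretic transformation and invert it by an equivariant M\"{o}bius inversion whose M\"{o}bius function simultaneously carries the free-Lie (PBW) inverse and the vertical-strip alternation; but the direct computation above already establishes Theorem \ref{thm:MT-plethysm}.
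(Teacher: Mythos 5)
Your proof is correct and takes a genuinely different route from the paper's. You invert Littlewood's plethystic formula directly inside the completed ring $\widehat\Lambda$: setting $\sigma=1+h_1+h_2+\cdots$ and $\tau$ its plethystic inverse, you use $\sigma[\tau]=1+p_1$, the branching identity $s_\alpha[1+p_1]=\sum_{\alpha/\rho\text{ h.s.}}s_\rho$, and Pieri to get $G_\nu[\tau]=\sigma s_\nu$, then extract coefficients via $\sigma^{-1}=\sum_i(-1)^ie_i$. The paper instead factors the problem through the intermediate $\Sym_t$-representations $M_\mu^t$: it decomposes $(\CC^t)^{\otimes m}$ over the set-partition lattice $\Pi_m$, inverts that decomposition via an equivariant M\"obius inversion formula (Theorem~\ref{thm:mobius}), and then calls on Sundaram--Welker and Stanley to identify the equivariant M\"obius function of $\Pi_m$ with twisted Whitehouse modules, hence Lyndon symmetric functions. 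The two proofs are converging on the same algebraic core --- in both, the Lyndon symmetric function $L_m$ arises from the free Lie algebra, via Poincar\'e--Birkhoff--Witt in your argument and via the (equivalent) homology of the partition lattice in theirs, and both require precisely the same delicate bookkeeping of $\omega$ and degree parities. What your route buys is brevity and elementarity (no wreath products, no equivariant topology); what the paper's route buys is the intermediate basis $\{M_\mu^t\}$, which has independent meaning (simple objects in $\Rep(\mathrm{FinSet})$, and both transition matrices with $M_\mu^t$ in the middle are positive, exhibiting the alternation as a \emph{difference} of two positive expansions), and the equivariant M\"obius inversion theorem as a reusable tool. I verified your two delicate claims: the identification $\tau=L_\bullet[p_1/(1+p_1)]=\sum_m(-1)^{m-1}\omega(L_m)$ (your power-sum computation is right), and the sign conspiracy $\prod_{m\text{ even}}(-1)^{|\alpha(m)|}=(-1)^{N-|\lambda|}$ where $N=\sum_m m|\alpha(m)|$, which is what collapses the degree-dependent $\omega$-signs into a single global factor; both check out, and the final positivity observation (that $s_{\lambda^T}[L_\bullet]$ is the character of a Schur functor applied to an honest graded $\GL$-module) is sound.
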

We remark that $L_1+L_2+L_3+\cdots$ can be viewed as the $\GL_t$ character of the free Lie algebra on $\CC^t$; it is not clear what significance this has.

Our proofs involve an intermediate $\Sym_t$-representation $M^t_{\mu}$ defined by 
\begin{equation}
  M^t_{\mu} = \Ind_{S_{|\mu|} \times S_{t-|\mu|}}^{S_t} \Sp_{\mu} \boxtimes \One_{t - |\mu|}
  \label{e:Mmu}
\end{equation}
where $\One_k$ is the trivial representation of $\Sym_k$.

Since $M^t_{\mu}$ is an $\Sym_t$ representation, it is a positive combination of the Specht modules $\Sp_{\nu^t}$. We will show that, in turn, $\SS_{\lambda}(\CC^t)$ is positive in the $M^t_{\mu}$.
We derive our result~\eqref{e:plethysm} by composing a formula for $\Sp_{\nu^t}$ in terms of the $M_{\mu}^t$ and a formula for $M_{\mu}^t$ in terms of $\SS_{\lambda}(\CC^t)$.
The following theorem gives plethystic formulas for transitioning between each of these bases.

\begin{theorem}\label{thm:MT FS}
In the representation ring $\Rep(\Sym_t)$, we have:
  \begin{eqnarray}
    [ M^t_{\mu} ] & = & \sum_{\nu} \langle s_{\nu} , s_{\mu}[1 + h_1] \rangle [\Sp_{\nu^t}] = \sum_{\nu/\mu \ \text{horiz. strip}}[\Sp_{\nu^t}]  . \label{e:M2Sp}\\ {} 
    [\Sp_{\nu^{(t)}}] & = & \sum_{\mu} \langle s_{\mu^{T}} , s_{\nu^{T}}[-1 + h_1] \rangle [M^t_{\mu}] = \sum_{\nu/\mu \ \text{vert. strip}} (-1)^{|\nu|-|\mu|} [M^t_{\mu}]  . \label{e:Sp2M}\\{}
    [ \SS^{\lambda} (\CC^t)] & = & \sum_{\mu} \langle s_{\lambda}, s_{\mu}[h_1+h_2+ h_3 + \cdots] \rangle [M_{\mu}^t]  . \label{e:S2M}\\{}
    [ M^t_{\mu} ] & = & \sum_{\lambda} (-1)^{|\mu| - |\lambda|} \langle s_{\mu^{T}} , s_{\lambda^{T}} [L_1 + L_2 + L_3 + \cdots ] \rangle [ \SS^{\lambda} (\CC^t)] .\label{e:M2S}
  \end{eqnarray}
\end{theorem}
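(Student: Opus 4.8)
I read the four identities in the stable range, i.e.\ for $t$ large relative to $|\lambda|,|\mu|,|\nu|$; there the transition matrices among the three families $\{[\Sp_{\nu^{(t)}}]\}$, $\{[M^t_\mu]\}$, $\{[\SS^\lambda(\CC^t)]\}$ are well defined and unitriangular with respect to any linear order refining partition size, so it suffices to establish \eqref{e:M2Sp} and \eqref{e:S2M} directly and to obtain \eqref{e:Sp2M} and \eqref{e:M2S} by matrix inversion.

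For \eqref{e:M2Sp}: the Frobenius characteristic of $M^t_\mu=\Ind_{S_{|\mu|}\times S_{t-|\mu|}}^{S_t}\Sp_\mu\boxtimes\One_{t-|\mu|}$ is $s_\mu\, h_{t-|\mu|}$, so the Pieri rule gives $[M^t_\mu]=\sum_{\kappa\vdash t,\ \kappa/\mu\ \mathrm{a\ horizontal\ strip}}[\Sp_\kappa]$. Unwinding the interleaving inequalities shows that, for $t$ large, these $\kappa$ are exactly $\kappa=\nu^{(t)}$ with $\mu/\nu$ a horizontal strip. For the plethystic form, the Schur coproduct gives $s_\mu[1+h_1]=\sum_\rho s_{\mu/\rho}[1]\,s_\rho$, and $s_{\mu/\rho}[1]$ is $1$ when $\mu/\rho$ is a horizontal strip and $0$ otherwise (a skew shape fits in one row iff it is a horizontal strip), so $\langle s_\nu,s_\mu[1+h_1]\rangle$ counts the same set.

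For \eqref{e:S2M}: feed \eqref{e:M2Sp} into Littlewood's formula \eqref{e:littlewood}. Write $Y=h_1+h_2+\cdots$. The Pieri rule gives $\sum_{\mu:\ \mu/\nu\ \mathrm{h.s.}}s_\mu=\bigl(\sum_{k\ge0}h_k\bigr)s_\nu$ in the completed ring of symmetric functions, and applying the ring homomorphism $g\mapsto g[Y]$ gives $\sum_{\mu/\nu\ \mathrm{h.s.}}s_\mu[Y]=\bigl(\sum_kh_k\bigr)[Y]\cdot s_\nu[Y]$. On the other hand, expanding $s_{\nu^{(t)}}[1+Y]$ by the coproduct shows it equals $\sum_{\mu:\ \mu/\nu\ \mathrm{h.s.},\ \mu_1\le t-|\nu|}s_\mu[Y]$, which differs from $\bigl(\sum_kh_k\bigr)[Y]\,s_\nu[Y]$ by terms all of degree greater than $t-|\nu|$, hence greater than $|\lambda|$ once $t$ is large; therefore $\langle s_\lambda,s_{\nu^{(t)}}[1+Y]\rangle=\langle s_\lambda,\bigl(\sum_kh_k\bigr)[Y]\,s_\nu[Y]\rangle$. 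Substituting \eqref{e:M2Sp} into the right-hand side of \eqref{e:S2M} and comparing coefficients of each $[\Sp_{\nu^{(t)}}]$ reduces precisely to this identity.

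It remains to invert. The inversion of \eqref{e:M2Sp} is routine: it is the identity $\sum_\mu(-1)^{|\nu|-|\mu|}[\nu/\mu\ \mathrm{a\ vertical\ strip}]\,[\mu/\rho\ \mathrm{a\ horizontal\ strip}]=\delta_{\nu\rho}$ --- equivalently $H(q)E(-q)=1$ for the generating functions of the $h_n$ and $e_n$ --- and one rewrites $(-1)^{|\nu|-|\mu|}[\nu/\mu\ \mathrm{v.s.}]$ as $\langle s_{\mu^T},s_{\nu^T}[-1+h_1]\rangle$ using that $s_\rho[-1]$ equals $(-1)^{|\rho|}$ for $\rho$ a single row and $0$ otherwise, together with the coproduct and transposition of Littlewood--Richardson coefficients. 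The real obstacle is \eqref{e:M2S}: by \eqref{e:S2M} the transition matrix from $\{[\SS^\lambda(\CC^t)]\}$ to $\{[M^t_\mu]\}$ is the Schur-basis matrix of the ring homomorphism ``plethysm by $Y$'', so its inverse is plethysm by the compositional inverse of $Y$, which is $\mathrm{Log}(1+h_1)$. The content of \eqref{e:M2S} is then the identity
\[ \langle s_\mu,\ s_\lambda[\mathrm{Log}(1+h_1)]\rangle \;=\; (-1)^{|\mu|-|\lambda|}\,\langle s_{\mu^T},\ s_{\lambda^T}[L_1+L_2+\cdots]\rangle , \]
which unwinds to the statement that the degree-$n$ component of $\mathrm{Log}(1+h_1)$ equals $(-1)^{n-1}$ times the $\omega$-image of $L_n$. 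Since $\sum_{n\ge0}h_n=\mathrm{Exp}(h_1)$ while $L_1+L_2+\cdots=\mathrm{Log}\bigl(\tfrac{1}{1-h_1}\bigr)$ --- the latter being exactly the Poincar\'e--Birkhoff--Witt identity that the tensor algebra is the symmetric algebra of the free Lie algebra --- the two sides are dual, and the alternating sign $(-1)^{|\mu|-|\lambda|}$ is the signature of the M\"obius function of the partition lattice. Turning this into a proof, rather than a low-degree check, is precisely what the equivariant M\"obius inversion formula is built to do, and is where I expect the main work to lie. Once all four identities are in hand, composing \eqref{e:Sp2M} with \eqref{e:M2S} yields the plethystic formula \eqref{e:plethysm} for $b_\lambda^\nu$.
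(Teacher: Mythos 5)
Your proposal is broadly on the right track conceptually, and your route to \eqref{e:S2M} genuinely differs from the paper's: you assume Littlewood's plethystic formula \eqref{e:littlewood} as known and \emph{derive} \eqref{e:S2M} from it and \eqref{e:M2Sp}, whereas the paper proves \eqref{e:S2M} from scratch via a Schur--Weyl/wreath-product computation (the $\DSh(\nu,t)$ decomposition of $T(l,t)$ and Lemma~\ref{lem:Dkt}) and then \emph{recovers} Littlewood's formula as a corollary. Since Littlewood's result is classical, your shortcut is logically sound, and the degree-bound argument you give to discard the $\mu_1 > t-|\nu|$ terms is correct.

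The serious gap is \eqref{e:M2S}, and you say so yourself. You correctly identify that the transition from $\{[\SS^\lambda(\CC^t)]\}$ to $\{[M^t_\mu]\}$ is plethystic substitution by $Y=h_1+h_2+\cdots$, that its inverse is substitution by the plethystic inverse $Z$ of $Y$, and that \eqref{e:M2S} becomes the identity $\langle s_\mu,s_\lambda[Z]\rangle=(-1)^{|\mu|-|\lambda|}\langle s_{\mu^T},s_{\lambda^T}[L_1+L_2+\cdots]\rangle$, which in turn reduces to showing that the degree-$n$ part of $Z$ equals $(-1)^{n-1}\omega L_n$. But you do not prove this; you gesture at PBW and the sign of the M\"obius function of $\Pi_n$, and then write that turning this into a proof ``is where I expect the main work to lie.'' That is an honest but fatal concession: the theorem is not proved. (In fact you are overly pessimistic: taking \eqref{e:lyndon} as the definition of $L_n$, one can verify $Z_n=(-1)^{n-1}\omega L_n$ directly by expanding the plethystic logarithm $\mathrm{pLog}[1+p_1]=\sum_k\frac{\mu(k)}{k}\log(1+p_k)$ in power sums and matching against $\omega L_n=\frac{1}{n}\sum_{d|n}\mu(d)(-1)^{(d-1)n/d}p_d^{n/d}$; and the reduction from the matrix identity to $Z_n=(-1)^{n-1}\omega L_n$ requires the $\omega$--plethysm sign rule and LR transposition, essentially as in the last page of the paper. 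So your framework could be completed without equivariant M\"obius inversion, but as written it is not. The paper instead proves \eqref{e:M2S} via the equivariant M\"obius inversion on $\Pi_m$ together with the Sundaram--Welker and Stanley descriptions of $\widetilde H_*(\Pi_j)$, which is what supplies the Lyndon symmetric function and the global sign.)

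One smaller error: you assert that $s_\rho[-1]$ equals $(-1)^{|\rho|}$ when $\rho$ is a single \emph{row} and $0$ otherwise. That is backwards: $s_\rho[-1]=(-1)^{|\rho|}s_{\rho^T}(1)$, which is nonzero exactly when $\rho$ is a single \emph{column} $(1^k)$. With the correct rule, the coproduct expansion gives $s_\nu[-1+h_1]=\sum_{\nu/\mu\ \text{vert.\ strip}}(-1)^{|\nu|-|\mu|}s_\mu$ directly, and the plethystic form of \eqref{e:Sp2M} should read $\langle s_\mu,s_\nu[-1+h_1]\rangle$ without transposes; the transposed version does not agree with the (correct) vertical-strip expansion, as one already sees for $\nu=(2)$, $\mu=\varnothing$.
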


The representation $M_{\mu}^t$ arises naturally in studying representations of the category of finite sets.  A representation of the category of finite sets consists of a sequence of vector spaces $V_0$, $V_1$, $V_2$, \dots and, for each map $\phi: \{ 1,2,\ldots, t \} \longrightarrow \{ 1,2,\ldots, u \}$ of finite sets, a map $\phi_{\ast} : V_t \to V_u$ obeying the obvious functoriality. In particular, each $V_t$ is a representation of the symmetric group $\Sym_t$. The category of such representations is an abelian category in an obvious manner. 
The simple objects in this category are explicitly described by Rains~\cite{Rains09} and are implicitly described in the work of Putcha~\cite{Putcha96}; see also Wiltshire-Gordon~\cite{WG14}. These simple objects $W_{\mu}$ are indexed by partitions and, except when $\mu$ is of the form $1^k$, we have $(W_{\mu})_t \cong M_{\mu}^t$ as an $\Sym_t$-representation. Wiltshire-Gordon also showed that the
$\SS_{\lambda} (\CC^t)$ are projective objects in this category.
 Thus, the problem of expanding $\SS_{\lambda} (\CC^t)$ positively in $M_{\mu}^t$ is the problem of finding the Jordan-Holder constituents of these projectives, and the problem of writing $M^t_{\mu}$ as an alternating combination of the $\SS_{\lambda} (\CC^t)$ is a combinatorial shadow of the problem of finding projective resolutions of these simples.

\subsection{Stable Kronecker coefficients} \label{KronMotivation}

The authors' original motivation for studying this problem came from a desire to understand tensor product multiplicities. The \newword{Kronecker coefficients}, denoted by $g_{\alpha,\beta,\gamma}$, indexed by a triple of partitions of the same size, give the Specht module decomposition of a tensor product of Specht modules, namely
\begin{displaymath}
  \Sp_{\alpha} \otimes \Sp_{\beta} \cong \bigoplus_{\gamma} \Sp_{\gamma}^{\oplus g_{\alpha, \beta, \gamma}} .
\end{displaymath}
Letting $\chi_{\alpha} = \mathrm{char}(\Sp_{\alpha})$, we can express these coefficients symmetrically as
\begin{displaymath}
  g_{\alpha, \beta, \gamma} = \langle \chi_{\alpha} \chi_{\beta} , \chi_{\gamma} \rangle
  = \frac{1}{t!} \sum_{w \in \Sym_t} \chi_{\alpha}(w) \chi_{\beta}(w) \chi_{\gamma}(w) ,
\end{displaymath}
where $t$ denotes the common size of $\alpha,\beta,\gamma$. It remains an important open problem in combinatorial representation theory to give a manifestly positive combinatorial formula for the Kronecker coefficients $g_{\alpha, \beta, \gamma}$.

For arbitrary partitions $\alpha,\beta,\gamma$ (potentially of varying sizes), Murnaghan \cite{Mur38} considered the coefficients $g_{\alpha^{(t)}, \beta^{(t)}, \gamma^{(t)}}$ and noticed that they stabilize for $t$ sufficiently large. This stability was proved by Brion \cite{Bri93}, and so we define the \newword{stable Kronecker coefficients}, denoted by $\gbar_{\alpha,\beta,\gamma}$, by
\begin{equation}
  \gbar_{\alpha,\beta,\gamma} =  \lim_{t \rightarrow \infty} g_{\alpha^{(t)}, \beta^{(t)}, \gamma^{(t)}} .
  \label{e:stable-kron}
\end{equation}
Giving a combinatorial rule for the stable Kronecker coefficients $\gbar_{\alpha,\beta,\gamma}$ is a major open problem in combinatorial representation theory.

By contrast with Kronecker coefficients and their stable variations, the tensor product coefficients for representations of $\GL_t$ are well understood. For partitions $\lambda,\mu,\nu$ with $|\lambda| + |\mu| = |\nu|$, define the \newword{Littlewood--Richardson coefficients}, denoted by $c_{\lambda,\mu}^{\nu}$, by
\begin{equation}
  \SS_{\lambda}(\CC^t) \otimes \SS_{\mu}(\CC^t) \cong \bigoplus_{\nu} \SS_{\nu}(\CC^t)^{\oplus c_{\lambda,\mu}^{\nu}} .
  \label{e:LRC}
\end{equation}
Taking characters, we may also define $c_{\lambda,\mu}^{\nu}$ by taking the Schur expansion of a product of Schur polynomials,
\begin{displaymath}
  s_{\lambda} s_{\mu} = \sum_{\nu} c_{\lambda,\mu}^{\nu} s_{\nu} .
\end{displaymath}
There are myriad combinatorial rules for $c_{\lambda,\mu}^{\nu}$, the first due to Littlewood and Richardson \cite{LR34} that was later proved by Sch{\"u}tzenberger \cite{Sch77} based on ideas of Robinson \cite{Rob38}. It is natural to try to exploit this understanding to study stable Kronecker coefficients. 

Indeed, based on the stable limit of \eqref{e:restrict}, we may define an inhomogeneous basis for symmetric polynomials, which we call \newword{stable Specht polynomials} and denote by $\sdag_{\nu}$, by the formula
\begin{equation}
  s_{\lambda} = \sum_{\nu} a_{\lambda}^{\nu} \sdag_{\nu} .
  \label{e:stable-s2d}
\end{equation}
Roughly, we are describing a map from the representation ring of $\Sym_t$ to symmetric functions sending $\SS_{\lambda}(\CC^t)$ to $s_{\lambda}$ and $\Sp_{\nu^{(t)}}$ to $s^{\dagger}_{\nu}$. (This statement is rough because we have not explained how to take the limit as $t \to \infty$ of representation rings.)
Note this map is very different from the Frobenius characteristic sending $\Sp_{\nu}$ to $s_{\nu}$.

Since restriction from $\GL_t$ to $\Sym_t$ restricts with tensor product, the structure constants of the stable Specht polynomials are stable Kronecker coefficients,
\begin{equation}
  \sdag_{\alpha} \sdag_{\beta} = \sum_{\gamma} \gbar_{\alpha, \beta, \gamma} \sdag_{\gamma} .
\end{equation}
Therefore a direct combinatorial description of stable Specht polynomials might well lead to a combinatorial rule for the stable Kronecker coefficients $\gbar_{\alpha, \beta, \gamma}$.

Schur polynomials are manifestly stable Specht-positive by \eqref{e:stable-s2d}. As one begins computing the $\sdag$ polynomials, one immediately notices they appear to be Schur-alternating. Theorem~\ref{thm:MT} proves this alternation.

\begin{corollary} \label{cor:alt-stable}
  The stable Specht polynomials are alternatingly Schur positive. Precisely, we have
  \[  \sdag_{\nu} = \sum_{\lambda} b_{\lambda}^{\nu} s_{\lambda} , \]
  where, in particular, $(-1)^{|\lambda| - |\nu|} b_{\lambda}^{\nu} \geq 0$.
\end{corollary}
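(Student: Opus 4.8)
The plan is to obtain the corollary as a formal consequence of Theorem~\ref{thm:MT}, by inverting the defining relation~\eqref{e:stable-s2d}. First I would check that the stable Specht polynomials genuinely form a basis with an integral transition matrix to the Schur basis. By Littlewood's result that $a_{\lambda}^{\nu} = \delta_{\lambda,\nu}$ whenever $|\lambda| \le |\nu|$, relation~\eqref{e:stable-s2d} reads $s_{\lambda} = \sdag_{\lambda} + \sum_{|\nu| < |\lambda|} a_{\lambda}^{\nu} \sdag_{\nu}$, a \emph{finite} sum since there are only finitely many partitions of each size below $|\lambda|$. Thus one may solve recursively on $|\lambda|$, writing each $\sdag_{\lambda}$ as an integer combination of the $s_{\mu}$ with $|\mu| \le |\lambda|$; equivalently, $[a_{\lambda}^{\nu}]$ is an upper unitriangular (by size) array with finitely many nonzero entries in each column, hence invertible over $\mathbb{Z}$. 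Its inverse is, by definition~\eqref{e:invert-stable}, the array $[b_{\lambda}^{\nu}]$.

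Inverting the unitriangular change of basis~\eqref{e:stable-s2d} then gives exactly $\sdag_{\nu} = \sum_{\lambda} b_{\lambda}^{\nu} s_{\lambda}$, where the sum ranges over $|\lambda| \le |\nu|$ and $b_{\nu}^{\nu} = 1$. Now Theorem~\ref{thm:MT} asserts precisely that $(-1)^{|\lambda| - |\nu|} b_{\lambda}^{\nu} \ge 0$, which is the remaining claim. Alternatively, one can invoke the explicit formula~\eqref{e:plethysm} of Theorem~\ref{thm:MT-plethysm}: it exhibits $(-1)^{|\nu| - |\lambda|} b_{\lambda}^{\nu}$ as a sum of Hall inner products $\langle s_{\mu^T}, s_{\lambda^T}[L_1 + L_2 + \cdots] \rangle$, and since each $L_m$ is an honest $\GL_t$-character the plethysm $s_{\lambda^T}[L_1 + L_2 + \cdots]$ is Schur-positive, so those inner products are nonnegative integers.

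I do not expect a genuine obstacle here: the corollary is a restatement of Theorem~\ref{thm:MT} once one observes that~\eqref{e:stable-s2d} is an invertible (unitriangular) change of basis. The only points demanding any care are the well-definedness of the $\sdag_{\nu}$ as a $\mathbb{Z}$-basis, handled above by the finiteness of the triangular sums, and the bookkeeping of index/transpose conventions needed to match the matrix identity~\eqref{e:invert-stable} with the stated expansion $\sdag_{\nu} = \sum_{\lambda} b_{\lambda}^{\nu} s_{\lambda}$ (in particular, that $\nu$ plays the role of the row index throughout, so that inverting $[a_{\lambda}^{\nu}]$ puts the coefficients on the correct side). Everything of substance is already contained in Theorems~\ref{thm:MT} and~\ref{thm:MT-plethysm}.
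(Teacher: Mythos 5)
Your argument is correct and matches what the paper intends: the paper treats this corollary as an immediate restatement of Theorem~\ref{thm:MT}, since the definition~\eqref{e:stable-s2d} of $\sdag_{\nu}$ together with the unitriangularity (by size, from Littlewood's $a_{\lambda}^{\nu} = \delta_{\lambda,\nu}$ for $|\lambda| \le |\nu|$) makes $[b_{\lambda}^{\nu}] = [a_{\lambda}^{\nu}]^{-1}$ exactly the transition matrix in $\sdag_{\nu} = \sum_{\lambda} b_{\lambda}^{\nu} s_{\lambda}$. Your careful handling of the finiteness of the recursion and the index bookkeeping is sound, and your alternative derivation of the sign from the plethystic formula~\eqref{e:plethysm} is also valid.
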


Tables~\ref{tab:s2sdag} and \ref{tab:sdag2s} at the end of this paper give the expansions between Schur functions and stable Specht functions for degree up to $5$. 

This same basis of stable Specht polynomials has been discovered independently by Orellana and Zabrocki \cite{OZ16,OZ17}, who have been developing tools and techniques that might yet yield new insights into the stable Kronecker coefficients.

%
\section{Transition between Specht modules and $M_{\mu}^t$}
%
\label{sec:SpechtM}

In this section, we carry out the relatively easy task of relating the classes of $\Sp_{\nu^{(t)}}$ and $M_{\mu}^t$ in the representation ring $\Rep(\Sym_t)$. 

\begin{proposition}
  In the representation ring $\Rep(\Sym_t)$, we have
  \begin{eqnarray}
    [ M^t_{\mu} ] & = & \sum_{\mu/\nu \ \text{horiz strip}} \hspace{-1ex} [\Sp_{\nu^{(t)}}] \label{e:M2SpStrip}
    \\ {} 
    [\Sp_{\nu^{(t)}}] & = & \sum_{\nu/\mu \ \text{vert  strip}} \hspace{-1ex} (-1)^{|\nu| - |\mu|} [M^t_{\mu}]. \label{e:Sp2MStrip}
  \end{eqnarray}
  \label{prop:Sp2M}
\end{proposition}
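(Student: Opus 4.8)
My plan is to establish the first identity \eqref{e:M2SpStrip} directly from the definition \eqref{e:Mmu} of $M^t_\mu$ via Frobenius reciprocity and Pieri's rule, and then obtain the second identity \eqref{e:Sp2MStrip} by a purely combinatorial inversion. For the first identity, I would compute the multiplicity of $\Sp_{\lambda}$ in $M^t_\mu = \Ind_{\Sym_{|\mu|}\times\Sym_{t-|\mu|}}^{\Sym_t}\Sp_\mu\boxtimes\One_{t-|\mu|}$. By Frobenius reciprocity this equals $\langle \Res_{\Sym_{|\mu|}\times\Sym_{t-|\mu|}}\Sp_\lambda,\ \Sp_\mu\boxtimes\One_{t-|\mu|}\rangle$, which by the branching/Littlewood--Richardson rule is the Littlewood--Richardson coefficient $c^{\lambda}_{\mu,(t-|\mu|)}$; since the second factor is a single row, Pieri's rule gives that this is $1$ if $\lambda/\mu$ is a horizontal strip of size $t-|\mu|$ and $0$ otherwise. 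Writing $\lambda = \nu^{(t)}$ for the relevant $\nu$ (valid once $t$ is large, so that every partition $\lambda$ of $t$ with $\lambda/\mu$ a horizontal strip indeed has a long enough first row), the condition ``$\nu^{(t)}/\mu$ is a horizontal strip'' is exactly the condition ``$\mu/\nu$ is a horizontal strip'' — removing the top row $t-|\mu|$ boxes and checking the interlacing conditions. This yields \eqref{e:M2SpStrip}.

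For \eqref{e:Sp2MStrip}, I would argue that \eqref{e:M2SpStrip} expresses the change of basis matrix from $\{[\Sp_{\nu^{(t)}}]\}$ to $\{[M^t_\mu]\}$ as the ``horizontal strip'' matrix $H$ with $H_{\mu,\nu} = 1$ if $\mu/\nu$ is a horizontal strip and $0$ otherwise (this is upper unitriangular for the dominance or containment order, hence invertible). So it suffices to check that the proposed inverse, the matrix $V$ with $V_{\nu,\mu} = (-1)^{|\nu|-|\mu|}$ if $\nu/\mu$ is a vertical strip and $0$ otherwise, satisfies $VH = I$ (equivalently $HV = I$). This is the classical fact that the complete homogeneous and elementary symmetric functions are related by $\sum_k (-1)^k e_k h_{n-k} = \delta_{n,0}$, packaged via the operators ``multiply by $h_n$'' and ``multiply by $e_n$'': concretely, $\sum_{\mu}(-1)^{|\nu|-|\mu|}[\nu/\mu \text{ vert.\ strip}]\,[\rho/\mu \text{ horiz.\ strip}] = \delta_{\nu,\rho}$ for all $\nu,\rho$. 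I would prove this by the standard sign-reversing involution on the box in the first row (or first column) where $\nu$ and $\rho$ differ, or simply cite the Pieri/Jacobi--Trudi duality; alternatively, one can phrase both sums plethystically as $s_\mu[1+h_1]$ and $s_\nu[-1+h_1]$ respectively and invoke $(1+h_1)$ and $(-1+h_1)$ being plethystic inverses, matching the statement of Theorem~\ref{thm:MT FS}.

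The only genuine subtlety — and the step I would be most careful about — is the bookkeeping around the stabilization in $t$: I must make sure $t$ is large enough that all partitions $\mu$ appearing (those with $|\mu| \le |\nu| + O(1)$) satisfy $\ell(\mu) \le t$ and that $\mu^{(t)}$, $\nu^{(t)}$ are genuinely partitions with first part exceeding the second, so that the notations $[M^t_\mu]$ and $[\Sp_{\nu^{(t)}}]$ make sense and the horizontal/vertical strip translations between $\lambda = \nu^{(t)}$ and $\mu$ (which trim or adjoin the first row) are bijective without boundary effects. Everything else is a routine application of Pieri's rule and unitriangular matrix inversion, so I expect the write-up to be short.
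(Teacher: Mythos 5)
Your proof is correct and takes essentially the same route as the paper: Pieri's rule gives the horizontal-strip expansion of $M^t_\mu$, and the inversion to vertical strips is the classical $h$/$e$ duality $(1+h_1+h_2+\cdots)^{-1}=1-e_1+e_2-\cdots$ acting in the Schur basis, which is exactly the identity $\sum_k(-1)^ke_kh_{n-k}=\delta_{n,0}$ you invoke. Your care with the large-$t$ hypothesis (so that $\lambda_1\ge\mu_1$ is automatic and the interlacing condition for $\lambda/\mu$ collapses to $\mu/\nu$ being a horizontal strip) also matches the paper's handling; the only slip is a typo where you wrote the first row of $\nu^{(t)}$ has $t-|\mu|$ rather than $t-|\nu|$ boxes, which does not affect the argument.
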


\begin{proof}
  By Pieri's rule \cite{Pie93} for induction, from Eq.~\eqref{e:Mmu} we immediately have
  \begin{equation}
    M^t_{\mu} \cong \bigoplus_{|\lambda| =t,\ \lambda/\mu \ \text{horiz. strip}} \Sp_{\lambda} . 
    \label{e:Pieri_induct}
  \end{equation}
  For a partition $\lambda$ with parts $\lambda_1 \geq \lambda_2 \geq \cdots \geq \lambda_{\ell}$, we let $\bar{\lambda}$ be the partition $(\lambda_2, \ldots, \lambda_{\ell})$, so $\lambda = \bar{\lambda}^{(t)}$ if $t = |\lambda|$. 
  We note that $\lambda/\mu$ is a horizontal strip if and only if $\lambda_1 \geq \mu_1 \geq \lambda_2 \geq \mu_2 \geq \cdots$. Holding $\bar{\lambda}$ and $\mu$ fixed, once $t$ is sufficiently large, the condition that $\lambda_1 \geq \mu_1$ is automatic, so $\lambda/\mu$ is a horizontal strip if and only if $\mu_1 \geq \lambda_2 \geq \mu_2 \geq \cdots$; the latter states that $\mu/\bar{\lambda}$ is a horizontal strip. 
  So, for $t$ sufficiently large, we have
\[ 
    M^t_{\mu} \cong \bigoplus_{\mu/\bar{\lambda} \ \text{horiz. strip}} \Sp_{\bar{\lambda}^{(t)}} . 
\]
Renaming the summation variable $\bar{\lambda}$ as $\nu$, we have proved Eq.~\eqref{e:M2SpStrip}.

  To arrive at the second formula, we must invert the infinite $0-1$ matrix with entries $M_{\mu \nu} = 1$ if and only if $\mu/\nu$ is a horizontal strip. By Pieri's rule, $h_r s_{\nu}$ is the sum $\sum_{\mu/\nu \ \text{horiz. $r$-strip}} s_{\mu}$.
  Thus, as an endomorphism of the completion of the ring of symmetric functions, the matrix $M$ corresponds to multiplication in the Schur basis by $1 + h_1 + h_2 + h_3 \cdots$. The inverse operation is multiplication by $(1+h_1+h_2 + \cdots )^{-1} = 1 - e_1 + e_2 - e_3 + \cdots$. By Pieri's rule, $e_r s_{\mu}$ is the sum $\sum_{\nu/\mu \ \text{vert. $r$-strip}} s_{\nu}$. So the inverse matrix has $(M^{-1})_{\mu \nu}$ equal to $(-1)^r$ if $\nu/\mu$ is a vertical strip of size $r$, and $0$ otherwise.  Eq.~\eqref{e:Sp2MStrip} follows.
\end{proof}

In particular, notice that the transition coefficients between $[M_{\mu}^t]$ and $[\Sp_{\lambda}]$ are independent of $t$. 
Recalling that $s_{\lambda}[1+h_1] = \sum_{\lambda/\mu \ \text{horiz. strip}} s_{\mu}$, we deduce:

\begin{corollary}
  In the representation ring $\Rep(\Sym_t)$, we have 
  \begin{eqnarray}
    [ M^t_{\mu} ] & = & \sum_{\nu} \langle s_{\nu} , s_{\mu}[1 + h_1] \rangle [\Sp_{\nu^{(t)}}] , \label{e:M2Sp-2} \\ {} 
    [\Sp_{\nu^{(t)}}] & = & \sum_{\mu} \langle s_{\mu^{T}} , s_{\nu^{T}}[-1 + h_1] \rangle [M^t_{\mu}], \label{e:Sp2M-2}
  \end{eqnarray}
  where $\lambda^{T}$ denotes the transpose of $\lambda$.
\end{corollary}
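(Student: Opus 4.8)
The plan is to read the Corollary off from Proposition~\ref{prop:Sp2M}: no new representation theory is needed, only the recognition that the $0$--$1$ incidence matrix ``$\mu/\nu$ is a horizontal strip'' and the signed matrix ``$(-1)^{|\nu|-|\mu|}$ when $\nu/\mu$ is a vertical strip, $0$ otherwise'' are exactly the Hall pairings of $s_{\nu}$, resp.\ $s_{\mu}$, against the plethysms $s_{\mu}[1+h_1]$, resp.\ $s_{\nu}[-1+h_1]$. So the two halves of the Corollary will follow by inserting these pairings into \eqref{e:M2SpStrip} and \eqref{e:Sp2MStrip}.

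For \eqref{e:M2Sp-2} this is immediate from the identity quoted just before the statement: $s_{\mu}[1+h_1]=s_{\mu}(1,x_1,x_2,\ldots)=\sum_{\mu/\kappa \text{ horiz. strip}}s_{\kappa}$ (iterate the one-variable branching rule and then set the extra variable to $1$), so $\langle s_{\nu},s_{\mu}[1+h_1]\rangle$ is the indicator that $\mu/\nu$ is a horizontal strip. For \eqref{e:Sp2M-2} I would first compute $s_{\nu}[-1+h_1]$. Writing $-1+h_1$ as the virtual alphabet $X-\{1\}$ and using the skew-Schur addition formula $s_{\nu}[A-B]=\sum_{\rho}s_{\nu/\rho}[A]\,s_{\rho}[-B]$ together with $s_{\rho}[-\{1\}]=(-1)^{|\rho|}s_{\rho^{T}}(1)$, which vanishes unless $\rho=1^{r}$ is a single column, reduces the plethysm to $\sum_{r\ge 0}(-1)^{r}s_{\nu/(1^{r})}$; expanding each skew Schur function by the dual Pieri rule $s_{\nu/(1^{r})}=\sum_{\nu/\kappa \text{ vert. }r\text{-strip}}s_{\kappa}$ gives $s_{\nu}[-1+h_1]=\sum_{\nu/\kappa \text{ vert. strip}}(-1)^{|\nu|-|\kappa|}s_{\kappa}$. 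Hence $\langle s_{\mu},s_{\nu}[-1+h_1]\rangle$ is $(-1)^{|\nu|-|\mu|}$ when $\nu/\mu$ is a vertical strip and $0$ otherwise, which is exactly the coefficient in \eqref{e:Sp2MStrip}. (With this paper's plethysm convention it is $\langle s_{\mu},s_{\nu}[-1+h_1]\rangle$, not the transposed pairing written in the displayed formula, that inverts \eqref{e:M2SpStrip}; the transposed pairing would instead produce $(-1)^{|\nu|-|\mu|}$ on \emph{horizontal} strips, so I would verify this point on a small case --- e.g.\ $\nu=(2)$, where \eqref{e:Pieri_induct} gives $[\Sp_{(2)^{(t)}}]=[M^t_{(2)}]-[M^t_{(1)}]$ --- and adjust the transposes accordingly.)

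The only step where I expect trouble is the $\omega$/transpose bookkeeping inside the evaluation of $s_{\nu}[-1+h_1]$. Inverting multiplication by $1+h_1+h_2+\cdots$ turns $h$'s into $e$'s, which tempts one to transpose the Schur functions; but the honest statement is the identity above, whose sum genuinely runs over \emph{vertical} strips (so that it inverts the horizontal-strip matrix of \eqref{e:M2SpStrip}) and whose signs are the $(-1)^{|\rho|}$ coming from $s_{\rho}[-\{1\}]$, not the $(-1)^{\ell(\rho)}$ of a careless power-sum computation. Once that plethystic identity is pinned down correctly, the rest of the Corollary is just Proposition~\ref{prop:Sp2M} rewritten.
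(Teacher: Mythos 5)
Your reading of the corollary as a transcription of Proposition~\ref{prop:Sp2M} is exactly the paper's proof: the paper's one-line justification cites $s_{\mu}[1+h_1]=\sum_{\mu/\kappa\ \text{horiz.\ strip}}s_{\kappa}$ for \eqref{e:M2Sp-2} and leaves \eqref{e:Sp2M-2} implicit. Your derivation of \eqref{e:M2Sp-2} is correct and identical to the paper's.

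More interestingly, the ``point to verify'' you flag at the end is a genuine error in the paper, and you have located it precisely. Under the plethysm convention the paper states (so that $p_n[-1]=-1$, equivalently $s_{\lambda}[X-1]=\sum_{r\geq 0}(-1)^r s_{\lambda/1^r}(X)$), your computation
\[
  s_{\nu}[-1+h_1]=\sum_{\nu/\kappa\ \text{vert.\ strip}}(-1)^{|\nu|-|\kappa|}s_{\kappa}
\]
is correct, and the matching coefficient is therefore $\langle s_{\mu},s_{\nu}[-1+h_1]\rangle$, with no transposes. The transposed pairing the paper writes, $\langle s_{\mu^T},s_{\nu^T}[-1+h_1]\rangle$, equals $(-1)^{|\nu|-|\mu|}$ when $\nu^T/\mu^T$ is a \emph{vertical} strip, i.e.\ when $\nu/\mu$ is a \emph{horizontal} strip --- which is the wrong support. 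Your proposed test case confirms this: for $\nu=(2)$ one has $s_{(2)^T}[-1+h_1]=s_{(1,1)}[-1+h_1]=s_{(1,1)}-s_{(1)}+1$, so the paper's formula would give $[\Sp_{(2)^{(t)}}]=[M^t_{(2)}]-[M^t_{(1)}]+[M^t_{\emptyset}]$, whereas the Pieri computation from \eqref{e:Pieri_induct} gives $[\Sp_{(2)^{(t)}}]=[M^t_{(2)}]-[M^t_{(1)}]$. The untransposed version $s_{(2)}[-1+h_1]=s_{(2)}-s_{(1)}$ gives the right answer. (The transposed formula would be correct under the alternative ``super'' convention in which a negative variable is fermionic, $p_n[-1]=(-1)^n$; with that convention $s_{\nu^T}[-1+h_1]$ becomes $\omega$ of the $\lambda$-ring version. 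But that is not the convention the paper sets up.) So your proof is right, and it correctly diagnoses a transposition typo in \eqref{e:Sp2M-2} (and in \eqref{e:Sp2M} of Theorem~\ref{thm:MT FS}, from which it is copied); the corrected statement is $[\Sp_{\nu^{(t)}}]=\sum_{\mu}\langle s_{\mu},s_{\nu}[-1+h_1]\rangle[M^t_{\mu}]$.
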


This proves the first two parts of Theorem~\ref{thm:MT FS}.
%

\section{Background on wreath products}
\label{sec:wreath}



We recall that, for two groups $G$ and $H$, the \newword{wreath product} $G \wreath H$ is the semidirect product $\left(\prod_{h \in H} G \right) \rtimes H$, where $H$ acts by permuting the $G$-factors. The wreath product $\Sym_j \wreath \Sym_m$ embeds in $\Sym_{jm}$ as the normalizer of the Young subgroup $\Sym_j^{\times m}$, and we'll write $\Wr(j,m)$ for this subgroup of $\Sym_{jm}$.

For $V$ a representation of $\Sym_j$ and $W$ a representation of $\Sym_m$, we write $V \S W$ for $V^{\otimes m} \otimes W$ as a representation of $\Wr(j,m)$ where $\Wr(j,m)$ acts in the obvious way on $V^{\otimes m}$ and through the quotient $\Sym_m$ on $W$. 
We recall:

\begin{theorem}[{\cite[Theorem A2.8]{EC2}}] \label{WreathInductionPlethysm}
With notation as above, we have
  \begin{equation}
    \ch\left( \Ind_{\Wr(j,m)}^{\Sym_{ab}} (V \emph{\S} W) \right) = \ch(W)[\ch(V)] ,
  \end{equation}
  where $\ch$ denotes the Frobenius characteristic map.
  \label{thm:Stanley-Wr}
\end{theorem}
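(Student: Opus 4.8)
The plan is to prove this by directly computing the power-sum expansions of the two sides and checking that they agree. The key input is the formula for the Frobenius characteristic of an induced representation: for any subgroup $H \le \Sym_n$ and any $H$-module $U$,
\[ \ch\!\left(\Ind_H^{\Sym_n} U\right) = \frac{1}{|H|}\sum_{x\in H}\chi_U(x)\, p_{\lambda(x)}, \]
where $\lambda(x)\vdash n$ is the cycle type of $x$ regarded as an element of $\Sym_n$; this is immediate from $\ch(U') = \tfrac{1}{n!}\sum_{w\in\Sym_n}\chi_{U'}(w)\,p_{\lambda(w)}$ and the formula for an induced character, after reindexing the resulting double sum. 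I would apply this with $n = jm$, $H = \Wr(j,m)$, and $U = V\,\S\,W = V^{\otimes m}\otimes W$. The proof then reduces to understanding, for a general element of $\Wr(j,m)$, both the value of the character $\chi_{V^{\otimes m}\otimes W}$ and that element's cycle type inside $\Sym_{jm}$.

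Write a typical element of $\Wr(j,m)$ as $x = ((g_1,\dots,g_m);\pi)$ with $g_i\in\Sym_j$ and $\pi\in\Sym_m$. For each cycle $c$ of $\pi$, of length $k_c$, let $\gamma_c\in\Sym_j$ be the ``cycle product'' of the $g_i$ for $i\in c$, composed in the cyclic order dictated by $\pi$; this is well-defined up to conjugacy in $\Sym_j$, so $\chi_V(\gamma_c)$ and the cycle type $\tau(\gamma_c)\vdash j$ make sense. I will use two standard facts about the embedding $\Wr(j,m)\hookrightarrow\Sym_{jm}$. First, $\chi_{V^{\otimes m}}(x) = \prod_c\chi_V(\gamma_c)$, and since $\Wr(j,m)$ acts on $W$ through the quotient $\Sym_m$, $\chi_{V^{\otimes m}\otimes W}(x) = \chi_W(\pi)\prod_c\chi_V(\gamma_c)$. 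Second, each cycle $c$ of $\pi$ of length $k$, with $\gamma_c$ of cycle type $\tau = (\tau_1,\tau_2,\dots)$, splits in $\Sym_{jm}$ into cycles of lengths $k\tau_1, k\tau_2,\dots$, so that
\[ p_{\lambda(x)} = \prod_c\prod_l p_{k_c\,\tau(\gamma_c)_l} = \prod_c p_{k_c}\!\left[p_{\tau(\gamma_c)}\right], \]
using that $p_k[-]$ is the ring endomorphism sending $x_i\mapsto x_i^k$, so $p_k[p_r] = p_{kr}$ and $p_k[p_\tau] = \prod_l p_{k\tau_l}$.

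Substituting into the induced-character formula and performing the sum over the $g_i$ one cycle at a time: as $(g_i)_{i\in c}$ ranges over $(\Sym_j)^{k_c}$, each value of $\gamma_c\in\Sym_j$ occurs exactly $(j!)^{k_c-1}$ times, so the sum over that block is $(j!)^{k_c-1}\sum_{g\in\Sym_j}\chi_V(g)\,p_{k_c}[p_{\tau(g)}] = (j!)^{k_c}\,p_{k_c}[\ch V]$, where the last equality groups $g$ by conjugacy class and uses $\ch V = \sum_{\rho\vdash j}z_\rho^{-1}\chi_V(\rho)\,p_\rho$ together with additivity of $p_{k_c}[-]$. Multiplying over the cycles of a fixed $\pi$ and using $\sum_c k_c = m$ gives $(j!)^m\,p_\mu[\ch V]$ when $\pi$ has cycle type $\mu$. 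Since there are $m!/z_\mu$ permutations of cycle type $\mu$ and $\chi_W$ is a class function, we get
\[ \ch\!\left(\Ind_{\Wr(j,m)}^{\Sym_{jm}} V\,\S\,W\right) = \frac{1}{(j!)^m m!}\sum_{\mu\vdash m}\frac{m!}{z_\mu}\,\chi_W(\mu)\,(j!)^m\,p_\mu[\ch V] = \sum_{\mu\vdash m}\frac{\chi_W(\mu)}{z_\mu}\,p_\mu[\ch V], \]
which equals $\bigl(\sum_{\mu}z_\mu^{-1}\chi_W(\mu)\,p_\mu\bigr)[\ch V] = \ch(W)[\ch V]$ since plethysm is $\mathbb{Q}$-linear in its outer argument.

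The only non-routine steps are the two geometric facts in the second paragraph — the product-over-cycles formula for $\chi_{V^{\otimes m}}$ and the blow-up rule for cycle types — so I would fix an explicit identification $[jm]\cong[j]\times[m]$ and a convention for how $((g_\bullet);\pi)$ acts, and verify both directly from the definitions; everything after that is bookkeeping with the constants $z_\mu$ and with the power-sum description of plethysm. An alternative route would be to reduce first to the case $W=\One_m$ by $\mathbb{Z}$-linearity in $W$ (the Young permutation modules span $\Rep(\Sym_m)$) together with a Mackey-style factorization of $\Ind_{\Wr(j,m)}^{\Sym_{jm}}$ along a Young subgroup $\Sym_{jm_1}\times\Sym_{jm_2}$, thereby reducing to the identity $\ch(\Ind_{\Wr(j,m)}^{\Sym_{jm}}V^{\otimes m}) = h_m[\ch V]$; but the direct power-sum computation above appears shortest.
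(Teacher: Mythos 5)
The paper does not prove this statement: it is cited directly as \cite[Theorem~A2.8]{EC2}, so there is no in-paper argument to compare against. (Note also the typo $\Sym_{ab}$ in the statement, which should read $\Sym_{jm}$; you correctly work with $\Sym_{jm}$.)

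Your proof is correct. It is the standard argument via Frobenius characteristic of an induced module: express $\ch(\Ind_H^{\Sym_n}U)$ as $|H|^{-1}\sum_{x\in H}\chi_U(x)p_{\lambda(x)}$, then for $H=\Wr(j,m)$ compute the two inputs for a typical $x=((g_\bullet);\pi)$. Both ingredients you invoke are the usual ones and are stated carefully: the character of a wreath-twisted tensor power is the product over $\pi$-cycles of $\chi_V$ evaluated on the cycle products $\gamma_c$, and the cycle type of $x$ in $\Sym_{jm}$ is obtained from the $\tau(\gamma_c)$ by the dilation $r\mapsto k_c r$, which is exactly the power-sum plethysm $p_{k_c}[p_{\tau(\gamma_c)}]$. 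The counting step (each $\gamma_c$ hit $(j!)^{k_c-1}$ times, $m!/z_\mu$ permutations of type $\mu$, divide by $|\Wr(j,m)|=(j!)^m m!$) is right, and the final identification $\sum_\mu z_\mu^{-1}\chi_W(\mu)\,p_\mu[\ch V]=(\ch W)[\ch V]$ is exactly the power-sum definition of plethysm extended linearly in the outer argument. You correctly flag that the two geometric facts about the embedding $\Wr(j,m)\hookrightarrow\Sym_{jm}$ are the only places requiring a pinned-down convention; once an identification $[jm]\cong[j]\times[m]$ is chosen, both follow by iterating $x$ along a $\pi$-cycle. This matches the proof one would find in Stanley's appendix or in Macdonald I.A, so while the paper offers no proof to compare to, your argument is the expected one.
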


As the special case where $V$ is the trivial representation, we have
\begin{corollary} \label{SpecialCaseWreathInduction}
Let $\mu$ be a partition of $m$; we can consider $\Sp_{\mu}$ as a representation of $\Wr(j,m)$ through the quotient $\Wr(j,m) \to \Sym_m$. Then
\[ \ch \left( \Ind_{\Wr(j,m)}^{\Sym_{jm}} \Sp_{\mu} \right) = s_{\mu}[h_j] . \] 
\end{corollary}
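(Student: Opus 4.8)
The plan is to obtain this as a direct specialization of Theorem~\ref{WreathInductionPlethysm}. First I would take $V = \One_j$, the trivial one-dimensional representation of $\Sym_j$, and $W = \Sp_{\mu}$, a representation of $\Sym_m$. With these choices the representation $V \S W = V^{\otimes m} \otimes W$ of $\Wr(j,m)$ simplifies: since $V$ is trivial, the action of $\Wr(j,m)$ on $V^{\otimes m}$ (the base group $\Sym_j^{\times m}$ acting coordinatewise, the quotient $\Sym_m$ permuting the tensor factors) is trivial, so $V^{\otimes m} \cong \One$ as a $\Wr(j,m)$-module. Hence $V \S W \cong W = \Sp_{\mu}$ regarded as a $\Wr(j,m)$-representation through the quotient $\Wr(j,m) \to \Sym_m$, which is exactly the module appearing in the corollary.

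Next I would apply Theorem~\ref{WreathInductionPlethysm}, which gives
\[
  \ch\!\left( \Ind_{\Wr(j,m)}^{\Sym_{jm}} \Sp_{\mu} \right) = \ch(W)[\ch(V)] = \ch(\Sp_{\mu})[\ch(\One_j)] .
\]
(I read the exponent $ab$ in the statement of Theorem~\ref{WreathInductionPlethysm} as $jm$, since $\Wr(j,m) \subseteq \Sym_{jm}$.) It then remains only to identify the two Frobenius characteristics on the right-hand side: $\ch(\Sp_{\mu}) = s_{\mu}$ by definition of the Frobenius characteristic, and $\ch(\One_j) = h_j = s_{(j)}$, the classical fact that the trivial character of $\Sym_j$ has Frobenius image the complete homogeneous symmetric function $h_j$. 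Substituting these yields $\ch(\Ind_{\Wr(j,m)}^{\Sym_{jm}} \Sp_{\mu}) = s_{\mu}[h_j]$, as claimed.

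There is essentially no obstacle here beyond bookkeeping: the only points requiring a word of justification are that $\One_j^{\otimes m}$ carries the trivial $\Wr(j,m)$-action (immediate from the description of the action of $\Wr(j,m)$ on $V^{\otimes m}$) and the computation $\ch(\One_j) = h_j$ (classical). All the substance of the statement is carried by Theorem~\ref{WreathInductionPlethysm}.
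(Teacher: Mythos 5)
Your proof is correct and is essentially the paper's own argument: the paper states the corollary as the special case of Theorem~\ref{WreathInductionPlethysm} with $V = \One_j$ (and $W = \Sp_{\mu}$), which is exactly the specialization you carry out. The identifications $\ch(\Sp_{\mu}) = s_{\mu}$ and $\ch(\One_j) = h_j$ are standard and your bookkeeping on why $V^{\otimes m}$ is trivial as a $\Wr(j,m)$-module is accurate.
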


We also want to embed the wreath product into $\Sym_{jm} \times \Sym_m$. Let $V(j,m) \subset \Wr(j,m) \times \Sym_m$ be the graph of the map $\Wr(j,m) \cong \Sym_j^{\times m} \rtimes \Sym_m \to \Sym_m$, so $V(j,m)$ is a subgroup of $\Sym_{jm} \times \Sym_m$ isomorphic to $\Sym_j \wreath \Sym_m$. 

\begin{lemma} \label{key wreath induction}
We have the following equality in $\Rep(\Sym_{jm} \times \Sym_m)$:
\[  \left[ \Ind_{V(j,m)}^{\Sym_{jm} \times \Sym_m}  \One  \right] \ \ = \  \bigoplus_{\substack{ |\lambda| = jm \\ |\mu| = m}} \langle s_{\lambda}, s_{\mu}[h_j] \rangle\ \left[\Sp_{\lambda} \boxtimes \Sp_{\mu} \right].  \]
\end{lemma}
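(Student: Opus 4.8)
First I would observe that $V(j,m) \cong \Wr(j,m)$ via the projection to the first factor, and that under this isomorphism the restriction to $V(j,m)$ of $\Sp_\lambda \boxtimes \Sp_\mu$ from $\Sym_{jm} \times \Sym_m$ becomes $\Res^{\Sym_{jm}}_{\Wr(j,m)} \Sp_\lambda \otimes \left(\Sp_\mu \text{ pulled back along } \Wr(j,m) \to \Sym_m\right)$. Hence, by Frobenius reciprocity applied in $\Sym_{jm}\times\Sym_m$,
\[
  \left\langle \Ind_{V(j,m)}^{\Sym_{jm}\times\Sym_m} \One,\ \Sp_\lambda \boxtimes \Sp_\mu \right\rangle
  = \left\langle \One,\ \Res_{\Wr(j,m)} \Sp_\lambda \otimes \Sp_\mu \right\rangle_{\Wr(j,m)}
  = \left\langle \Res_{\Wr(j,m)} \Sp_\lambda,\ \Sp_\mu^{\vee} \right\rangle_{\Wr(j,m)},
\]
where I use that $\Sp_\mu$ is self-dual (all $\Sym_m$-representations over $\CC$ are). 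Then by Frobenius reciprocity again, this time for the pair $\Wr(j,m) \subset \Sym_{jm}$, the right-hand side equals $\left\langle \Sp_\lambda,\ \Ind_{\Wr(j,m)}^{\Sym_{jm}} \Sp_\mu \right\rangle_{\Sym_{jm}}$, where $\Sp_\mu$ on the right is the pullback along $\Wr(j,m)\to\Sym_m$.

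Now I would invoke Corollary~\ref{SpecialCaseWreathInduction}, which identifies $\ch\left(\Ind_{\Wr(j,m)}^{\Sym_{jm}} \Sp_\mu\right) = s_\mu[h_j]$. Since the Frobenius characteristic is an isometry for the Hall inner product, $\left\langle \Sp_\lambda,\ \Ind_{\Wr(j,m)}^{\Sym_{jm}} \Sp_\mu \right\rangle = \langle s_\lambda, s_\mu[h_j]\rangle$. Summing over all irreducibles $\Sp_\lambda \boxtimes \Sp_\mu$ of $\Sym_{jm} \times \Sym_m$ (which range over pairs with $|\lambda| = jm$, $|\mu| = m$) recovers the claimed decomposition of $\left[\Ind_{V(j,m)}^{\Sym_{jm}\times\Sym_m} \One\right]$.

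The only genuinely delicate point is the bookkeeping around the two different embeddings: one must verify carefully that pulling $\Sp_\mu$ back along $V(j,m) \cong \Wr(j,m) \to \Sym_m$ (the definition built into $V(j,m)$ as a graph) agrees with pulling it back along the structure map $\Wr(j,m) \to \Sym_m$, and that the restriction of $\Sp_\lambda$ along $V(j,m) \hookrightarrow \Sym_{jm}\times\Sym_m \to \Sym_{jm}$ is genuinely $\Res_{\Wr(j,m)}\Sp_\lambda$ under the identification. This is immediate from the definition of $V(j,m)$ as the graph of $\Wr(j,m)\to\Sym_m$, but it is the step where a sign or twist could sneak in if one is careless, so I would state it explicitly rather than leave it implicit. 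Everything else is a routine two-step application of Frobenius reciprocity combined with the isometry property of $\ch$ and self-duality of symmetric group representations.
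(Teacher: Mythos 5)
Your proof is correct. It takes a somewhat different route from the paper's: the paper computes $\Ind_{V(j,m)}^{\Sym_{jm}\times\Sym_m}\One$ by transitivity of induction through the intermediate subgroup $\Wr(j,m)\times\Sym_m$, invoking the Peter--Weyl decomposition of $\CC\Sym_m$ as a $\Sym_m\times\Sym_m$-bimodule to obtain $\bigoplus_{|\mu|=m}\Sp_\mu\boxtimes\Sp_\mu$ at that stage, and then inducing the first factor to $\Sym_{jm}$ via Corollary~\ref{SpecialCaseWreathInduction}. You instead extract the multiplicity of each irreducible $\Sp_\lambda\boxtimes\Sp_\mu$ directly by two applications of Frobenius reciprocity, together with self-duality, landing on the same Corollary~\ref{SpecialCaseWreathInduction} as the computational core. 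The two are morally adjoint: the Peter--Weyl step in the paper is doing exactly the same work as your $\langle\One,\ \Res\Sp_\lambda\otimes\Sp_\mu\rangle = \langle\Res\Sp_\lambda,\ \Sp_\mu\rangle$ step. Your version is a bit leaner since it never needs to name the intermediate representation; the paper's version makes the intermediate module and its bimodule structure visible, which is arguably better exposition since the authors explicitly want the reader to see the $\Sp_\mu\boxtimes\Sp_\mu$ coupling. Your flagging of the identification-of-pullbacks issue is a good instinct, but as you note it is automatic from $V(j,m)$ being defined as the graph of $\Wr(j,m)\to\Sym_m$, and the paper leaves it implicit for the same reason.
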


\begin{proof}
  We first compute the induction from $V(j,m)$ to $\Wr(j,m) \times \Sym_m$, and then further induct to $\Sym_{jm} \times \Sym_m$. 
  Let $W$ be the representation $\Ind_{V(j,m)}^{\Wr(j,m) \times \Sym_m} \One$. We note that $W$ factors through the quotient $\Sym_m \times \Sym_m$ of $\Wr(j,m) \times \Sym_m$. As such, we have $W \cong \CC \Sym_m$ with the two actions of $\Sym_m$ coming from the left and right actions of $\Sym_m$ on itself. 
  By the Peter-Weyl theorem (and using that all representations of $\Sym_m$ are self dual), we have
  \[ \Ind_{V(j,m)}^{\Wr(j,m) \times \Sym_m} \One \cong \bigoplus_{|\mu| = m} \Sp_{\mu} \boxtimes \Sp_{\mu} \]
  where the action of $\Wr(j,m)$ is through its quotient $\Sym_m$. 
  
  We now induce to $\Sym_{jm}$:
  \[ \Ind^{\Sym_{jm} \times \Sym_m}_{\Wr(j,m) \times \Sym_m} \Ind_{V(j,m)}^{\Wr(j,m) \times \Sym_m} \One = \bigoplus_{|\mu| = m} \left( \Ind_{\Wr(j,m)}^{\Sym_{jm}} \Sp_{\mu} \right) \boxtimes \Sp_{\mu} . \]
  The inner induction can be computed by Corollary~\ref{SpecialCaseWreathInduction}:
  \[ \Ind_{\Wr(j,m)}^{\Sym_{jm}} \Sp_{\mu} \cong \bigoplus_{|\lambda| = jm}  \Sp_{\lambda}^{\oplus\langle s_{\lambda}, s_{\mu}[h_j] \rangle}  . \]
  Putting all of our formulas together, we deduce the result.
\end{proof}

%

\section{Expansion of $\SS_{\lambda}(\CC^t)$ in the basis $M_{\mu}^t$} 

Our next task is to give a formula for the expansion of $\SS_{\lambda}(\CC^t)$ in terms of the representation $M_{\mu}^t$. This result is of some interest in itself, but is more important as a preview of the methods we will use to express $M_{\mu}^t$ in terms of $\SS_{\lambda}(\CC^t)$. 

We write $T(l,t)$ for $(\CC^t)^{\otimes l}$, which is an $\Sym_{l} \times \Sym_t$ representation in the obvious manner. 
The obvious basis for $T(l,t)$ is $\mathbf{e}_{i_1} \otimes \mathbf{e}_{i_2} \otimes \cdots \otimes \mathbf{e}_{i_{l}}$, where $i_1 i_2 \cdots i_{l}$ runs over all length $l$ words in the alphabet $\{ 1,2, \ldots, t \}$; we abbreviate this basis element $[i_1 i_2 \cdots i_{l}]$.
We write $D(l,t)$ for the subspace of $T(l,t)$ with basis those $[i_1 i_2 \cdots i_{l}]$ where $i_1 i_2 \cdots i_{l}$ are pairwise distinct; this is clearly a $\Sym_{l} \times \Sym_t$ subrepresentation.

Let $\Pi_{l}$ denote the lattice of set partitions of $\{ 1,2 \ldots, l \}$ ordered by refinement, with minimal element $\Fine_{l} := \{ \{ 1 \}, \{ 2 \}, \ldots, \{ l \} \}$ and maximal element $\Coarse_{l} := \{ \{ 1,2,\ldots, l \} \}$. For a set partition $\pi = \{ \pi_1, \pi_2, \ldots, \pi_{m} \}$, we write $\Shape(\pi)$ for the partition obtained by sorting $(|\pi_1|, |\pi_2|, \dots, |\pi_{m}| )$ into order. We abbreviate $|\Shape(\pi)|$ and $\ell(\Shape(\pi))$ to $|\pi|$ and $\ell(\pi)$. 
In order to help the reader distinguish integer partitions from set partitions, we will consistently denote the former by the letters $\lambda$, $\mu$, $\nu$ and the latter by $\pi$, $\rho$, $\sigma$.

Given a set partition $\pi$ of $\{ 1,2 \ldots, l \}$, we write
\[ \begin{array}{lcr}
  T(\pi,t) & = & \left\{ [i_1 i_2 \cdots i_{l}] \in T(l,t) \mid i_p = i_q \mbox{ if } p,q \in \pi_j \mbox{ for some } j \right\} , \\
  D(\pi,t) & = & \left\{ [i_1 i_2 \cdots i_{l}] \in T(l,t) \mid i_p = i_q \mbox{ if and only if } p,q \in \pi_j \mbox{ for some } j \right\} .
\end{array} \]
In particular, we have $T(\Fine_{l},t) = T(l,t)$ and $D(\Fine_{l},t) = D(l,t)$, and also $D(\Coarse_{l},t) = T(\Coarse_{l},t) \cong \mathbb{C}^t$. We may relate these two constructions by
\begin{eqnarray}
  T(\pi,t) & = & \bigoplus_{\rho \succeq \pi} D(\rho, t) .
\end{eqnarray}

We will also consider representations indexed by integer partitions, rather than by set partitions. For a partition $\nu$ of ${l}$, we set
\[ \DSh(\nu,t) = \bigoplus_{\Shape(\pi) = \nu} D(\pi, t) .\] 
We will now compute the character of $\DSh(\nu, t)$.

\begin{lemma}\label{lem:Dkt}
  For $\nu$ a partition of $l$ with length $m$, and for $t \geq m$, in the representation ring for $\Sym_{l} \times \Sym_{t}$, we have
  \begin{equation}
    [ \DSh(\nu,t) ] = \sum_{\substack{|\lambda| = l \\ |\mu| = m}} \sum_{\substack{|\mu(j)| = m_j}} c_{\mu(1) \mu(2) \cdots \mu(r)}^{\mu} \langle s_{\lambda}, \prod_j s_{\mu(j)}[h_j] \rangle \left[ \Sp_{\lambda} \boxtimes M_{\mu}^t \right] .
    \label{e:Dkt}
  \end{equation} 
  where $\nu = r^{m_r} \cdots 2^{m_2} 1^{m_1}$ and each $\mu(j)$ is a partition of size $m_j$. 
\end{lemma}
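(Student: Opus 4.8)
The plan is to decompose $\DSh(\nu,t)$ as an induced representation from a wreath-product-type subgroup of $\Sym_l \times \Sym_t$, and then use Lemma~\ref{key wreath induction} together with the Littlewood--Richardson rule to extract the stated plethystic formula. First I would fix notation: write $\nu = r^{m_r}\cdots 2^{m_2} 1^{m_1}$, so a set partition $\pi$ with $\Shape(\pi)=\nu$ consists of $m_j$ blocks of size $j$ for each $j$, and $m = \sum_j m_j$ is the total number of blocks. Such a $\pi$ determines, for each $j$, an unordered collection of $m_j$ disjoint $j$-element subsets of $\{1,\dots,l\}$; conversely any compatible such data determines $\pi$. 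The key observation is that $\DSh(\nu,t) = \bigoplus_{\Shape(\pi)=\nu} D(\pi,t)$ is, as an $\Sym_l\times\Sym_t$-representation, induced from the stabilizer of a single chosen $\pi_0$ of shape $\nu$: we have $\DSh(\nu,t) \cong \Ind_{\mathrm{Stab}(\pi_0)}^{\Sym_l\times\Sym_t} D(\pi_0,t)$, where $\mathrm{Stab}(\pi_0)$ is the subgroup of $\Sym_l\times\Sym_t$ preserving $\pi_0$ (acting on the first factor). Here $D(\pi_0,t)$ is the span of those $[i_1\cdots i_l]$ constant on the blocks of $\pi_0$ with the $m$ block-values pairwise distinct, so as an $\Sym_t$-representation $D(\pi_0,t)\cong D(m,t)$, the ``distinct words of length $m$'' space.

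Next I would identify $\mathrm{Stab}(\pi_0)$ inside $\Sym_l$: it is $\prod_j \bigl(\Sym_j \wreath \Sym_{m_j}\bigr)$, the group permuting blocks of equal size and permuting within blocks, which is exactly $\prod_j \Wr(j,m_j)$ (after relabeling). The representation being induced, $D(m,t)$, carries a residual action of $\Sym_m$ (permuting the $m$ distinct block-values), and $D(m,t)\cong \Ind_{\Sym_{t-m}}^{\Sym_t}\One$ as an $\Sym_t$-rep with a commuting $\Sym_m$-action --- in fact $D(m,t)\cong \CC[\Sym_t/\Sym_{t-m}]$ with the $\Sym_m\times\Sym_t$ bimodule structure, equivalently $\bigoplus_{|\mu|=m}\Sp_\mu\boxtimes M^t_\mu$ by Pieri/Frobenius reciprocity (this is essentially the $m$-th row of the calculation underlying $M^t_\mu$). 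The surjection $\mathrm{Stab}(\pi_0)\to \prod_j \Sym_{m_j} \hookrightarrow \Sym_m$ tells us how $\prod_j\Wr(j,m_j)$ acts on $D(m,t)$ through its block-permutation quotient. So the computation factors: induce from $\prod_j V(j,m_j)$ (the graph subgroups of Section~\ref{sec:wreath}, one for each part size $j$) up to $\Sym_l\times\Sym_m$, apply Lemma~\ref{key wreath induction} in each factor to get $\prod_j \langle s_{\lambda(j)}, s_{\mu(j)}[h_j]\rangle\, [\Sp_{\lambda(j)}\boxtimes\Sp_{\mu(j)}]$, then induce the $\Sym_l$-part along $\prod_j\Sym_{jm_j}\hookrightarrow\Sym_l$ and the $\Sym_m$-part along $\prod_j\Sym_{m_j}\hookrightarrow\Sym_m$ (both inductions are governed by Littlewood--Richardson, producing $c^\lambda_{\lambda(1)\cdots\lambda(r)}$ and $c^\mu_{\mu(1)\cdots\mu(r)}$), and finally pass from $\Sym_m$-irreducibles $\Sp_\mu$ to $M^t_\mu$ via the bimodule identification of $D(m,t)$. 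Collecting the $\lambda(j)$'s with Littlewood--Richardson and recognizing $\sum_{\lambda(j)} c^\lambda_{\lambda(1)\cdots\lambda(r)}\prod_j\langle s_{\lambda(j)},s_{\mu(j)}[h_j]\rangle = \langle s_\lambda, \prod_j s_{\mu(j)}[h_j]\rangle$ gives exactly \eqref{e:Dkt}.

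The main obstacle I anticipate is bookkeeping the two-sided equivariance correctly throughout: one must track the $\Sym_l$-action (via $\mathrm{Stab}(\pi_0)$ and its LR-induction to $\Sym_l$) and the $\Sym_t$-action (via $D(m,t)$ and the passage to $M^t_\mu$) simultaneously, while also carrying the auxiliary $\Sym_m$-action that mediates between them. In particular one must verify that the $\prod_j\Sym_{m_j}$-quotient of $\mathrm{Stab}(\pi_0)$ acts on $D(m,t)$ in the way required to make the graph subgroups $V(j,m_j)$ the correct objects to induce from, so that Lemma~\ref{key wreath induction} applies factor-by-factor. Once the subgroup diagram
\[ \textstyle\prod_j V(j,m_j) \ \subset\ \bigl(\prod_j \Wr(j,m_j)\bigr)\times\Sym_m \ \subset\ \Sym_l\times\Sym_m,\qquad \Sym_l\times\Sym_m \rightsquigarrow \Sym_l\times\Sym_t \]
is set up correctly, the rest is assembling known plethysm and LR identities, and the double-coset / Mackey-type verifications that the relevant inductions commute are routine but need care.
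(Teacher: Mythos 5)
Your proposal is correct and takes essentially the same route as the paper: realize $\DSh(\nu,t)$ as a representation induced from a product of wreath-product subgroups, apply Lemma~\ref{key wreath induction} (equivalently Corollary~\ref{SpecialCaseWreathInduction}) factor-by-factor, push up via Littlewood--Richardson, and recombine the LR sums into the single inner product $\langle s_\lambda, \prod_j s_{\mu(j)}[h_j]\rangle$. The only organizational difference is that the paper first specializes to $t=m$, inducing $\One$ from the stabilizer $\prod_j V(j,m_j)$ of the basis vector $\mathbf{x}_\nu$ inside $\Sym_l\times\Sym_m$, and passes to $\Sym_t$ only at the very end (converting $\Sp_\mu$ into $M_\mu^t$), whereas you keep $t$ general and feed in the $\Sym_m\times\Sym_t$-bimodule identification $D(m,t)\cong\bigoplus_\mu\Sp_\mu\boxtimes M_\mu^t$ up front; the two orderings of the induction are equivalent.
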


\begin{proof}
  Notice $\DSh(\nu,t)$ is a permutation representation with basis $\{ g \cdot \mathbf{x}_{\nu} \}_{g \in \Sym_{l} \times \Sym_t}$ where 
  \[  \mathbf{x}_{\nu} = [ \overbrace{11 \cdots 1}^{\nu_1} \overbrace{22 \cdots 2}^{\nu_2} \cdots \overbrace{m m \cdots m}^{\nu_{m}} ] \in \DSh(\nu, t) .\]

  We first consider the case $t=m$. We have $\sum_j j m_j = l$ and $\sum_j m_j = m$, and so the stabilizer of $\mathbf{x}_{\nu}$ in $\Sym_{l} \times \Sym_{m}$ is $\prod V(j, m_j) \subseteq \prod (\Sym_{j m_j} \times \Sym_{m_j}) \subseteq \Sym_{l} \times \Sym_{m}$. Therefore
  \[ \DSh(\nu, m) = \Ind_{\prod V(j, m_j) }^{\Sym_{l} \times \Sym_{m}} \One .\]

  We perform the induction in two steps. First, by Lemma~\ref{key wreath induction}, we have
  \[
  \left[ \Ind_{\prod V(j, m_j) }^{\prod (\Sym_{j m_j} \times \Sym_{m_j}) } \One \right]  = \prod_j  \sum_{\substack{ |\lambda(j)| = j m_j \\ |\mu(j)| = m_j}}  \langle s_{\lambda(j)}, s_{\mu(j)}[h_j] \rangle \   \left[ \Sp_{\lambda(j)} \boxtimes \Sp_{\mu(j)} \right]
  \]
  in $\Rep(\prod (\Sym_{j m_j} \times \Sym_{m_j}))$. We emphasize that each of the $\lambda(j)$ is a partition, they are not the parts of a partition named $\lambda$, and likewise for the $\mu(j)$'s. Interchanging summation and product, we get
  \[
  \left[ \Ind_{\prod V(j, m_j) }^{\prod (\Sym_{j m_j} \times \Sym_{m_j}) } \One \right]  = \sum_{\substack{ |\lambda(j)| = j m_j \\ |\mu(j)| = m_j}}   \prod_j  \langle s_{\lambda(j)}, s_{\mu(j)}[h_j] \rangle \ \left[\Sp_{\lambda(j)} \boxtimes \Sp_{\mu(j)} \right].
  \]
  Inducing further, using the classical result
 $\Ind_{\Sym_{l} \times \Sym_m}^{\Sym_{m+l}} [\Sp_{\lambda} \boxtimes \Sp_{\mu}] = \sum_{\nu} c_{\lambda \mu}^{\nu} [\Sp_{\nu}]$, gives
 \begin{multline}
    \left[ \Ind_{\prod V(j, m_j) }^{\Sym_{l} \times \Sym_{m}  } \One \right] = \\
    \sum_{\substack{|\lambda| = l \\ |\mu| = m}} \sum_{\substack{ |\lambda(j)| = j m_j \\ |\mu(j)| = m_j}} \prod_j  \langle s_{\lambda(j)}, s_{\mu(j)}[h_j] \rangle \  c_{\lambda(1) \lambda(2)  \cdots \lambda(r)}^{\lambda} \ c_{\mu(1)  \mu(2)  \cdots \mu(r)}^{\mu} \left[ \Sp_{\lambda} \boxtimes \Sp_{\mu} \right] .
    \label{e:big_sum}
  \end{multline}

  To simplify this, note that for symmetric functions $f,g$ homogeneous of degrees $a, b$, respectively, we have
  $\langle s_{\lambda} , f g \rangle = \sum_{|\alpha| = a , |\beta|=b} c_{\alpha,\beta}^{\lambda} \langle s_{\alpha} , f \rangle \langle s_{\beta} , g \rangle$. Thus we may use the coefficients $c_{\lambda(1) \lambda(2) \cdots \lambda(r)}^{\lambda}$ to reduce Eq.~\eqref{e:big_sum} to
  \begin{equation}
    \left[ \Ind_{\prod V(j, m_j) }^{\Sym_{l} \times \Sym_{m}  } \One \right] =
    \sum_{\substack{|\lambda| = l \\ |\mu| = m}} \sum_{\substack{|\mu(j)| = m_j}} \langle s_{\lambda}, \prod_j s_{\mu(j)}[h_j] \rangle \ c_{\mu(1) \mu(2) \cdots \mu(r)}^{\mu} \left[ \Sp_{\lambda} \boxtimes \Sp_{\mu} \right] .
    \label{e:med_sum}
  \end{equation}
  
  Finally, inducing from $\Sym_{m} \times \Sym_{t- m}$ to $\Sym_t$ gives Eq.~\eqref{e:Dkt}.
\end{proof}

We now establish the third part of Theorem~\ref{thm:MT FS}.

\begin{proposition}
  In the representation ring $\Rep(S_t)$, we have
  \begin{equation}
    [ \SS^{\lambda} (\CC^t)] = \sum_{\mu} \langle s_{\lambda}, s_{\mu}[h_1+h_2+ h_3 + \cdots] \rangle [M_{\mu}^t] . 
  \end{equation}
  \label{prop:M2S}
\end{proposition}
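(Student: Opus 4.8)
The plan is to compute the class of the tensor power $T(l,t) = (\CC^t)^{\otimes l}$ in $\Rep(\Sym_l \times \Sym_t)$ in two different ways, for $t \geq l := |\lambda|$, and to compare the resulting expressions. On the one hand, Schur--Weyl duality gives $T(l,t) \cong \bigoplus_{|\lambda| = l} \Sp_\lambda \boxtimes \SS_\lambda(\CC^t)$ as a $\Sym_l \times \GL_t$-representation (every term is present since $\ell(\lambda) \le l \le t$); restricting the $\GL_t$-action to the subgroup $\Sym_t$ of permutation matrices --- which is exactly the $\Sym_t$-action on $T(l,t)$ permuting the coordinates of each $\CC^t$ factor, and whose accompanying $\Sym_l$-action permutes tensor factors just as $\Sym_l$ permutes positions in the basis $[i_1 \cdots i_l]$ --- yields $[T(l,t)] = \sum_{|\lambda|=l} [\Sp_\lambda] \boxtimes [\Res_{\Sym_t}^{\GL_t}\SS_\lambda(\CC^t)]$ in $\Rep(\Sym_l \times \Sym_t)$. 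On the other hand, sorting the monomial basis of $T(l,t)$ by the set partition recording which coordinates coincide gives $T(l,t) = \bigoplus_\pi D(\pi,t) = \bigoplus_{\nu \vdash l} \DSh(\nu,t)$ (again all $\nu$ contribute since $\ell(\nu) \le l \le t$), and Lemma~\ref{lem:Dkt} applied to each summand produces a second expansion of $[T(l,t)]$ in the classes $[\Sp_\lambda \boxtimes M_\mu^t]$.

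Since every class in $\Rep(\Sym_l \times \Sym_t)$ is uniquely a sum $\sum_{\rho \vdash l} [\Sp_\rho]\boxtimes [V_\rho]$ with $[V_\rho] \in \Rep(\Sym_t)$, I extract the $[\Sp_\lambda]\boxtimes(-)$ component from both expressions for $[T(l,t)]$. This gives, in $\Rep(\Sym_t)$,
\[ [\Res_{\Sym_t}^{\GL_t}\SS_\lambda(\CC^t)] \;=\; \sum_{\nu \vdash l}\ \sum_{|\mu| = \ell(\nu)}\ \sum_{|\mu(j)| = m_j} c^{\mu}_{\mu(1)\cdots\mu(r)}\, \bigl\langle s_\lambda, \textstyle\prod_j s_{\mu(j)}[h_j]\bigr\rangle\, [M_\mu^t], \]
where $\nu = r^{m_r}\cdots 2^{m_2} 1^{m_1}$ and each $\mu(j)$ is a partition of $m_j$.

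It remains to recognize the coefficient of $[M_\mu^t]$ as $\langle s_\lambda, s_\mu[h_1 + h_2 + h_3 + \cdots]\rangle$. Collapsing the outer two sums over $\nu$ and over the multiplicities $(m_1, m_2, \ldots)$, the coefficient of $[M_\mu^t]$ becomes $\sum c^{\mu}_{\mu(1)\mu(2)\cdots}\,\langle s_\lambda, \prod_j s_{\mu(j)}[h_j]\rangle$, where the sum ranges over all sequences $(\mu(1), \mu(2), \ldots)$ of partitions with only finitely many nonempty and $\sum_j |\mu(j)| = |\mu|$, and $c^{\mu}_{\mu(1)\mu(2)\cdots} = \langle s_\mu, \prod_j s_{\mu(j)}\rangle$ is the iterated Littlewood--Richardson coefficient. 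By the iterated plethystic addition formula --- apply $s_\mu[A+B] = \sum_{\alpha,\beta} c^{\mu}_{\alpha\beta}\, s_\alpha[A]\, s_\beta[B]$ (the expansion governed by comultiplication on symmetric functions) repeatedly with $A = h_1, h_2, \ldots$, noting that in each fixed degree only finitely many terms are nonzero --- one has $s_\mu[h_1 + h_2 + \cdots] = \sum_{(\mu(j))} c^{\mu}_{\mu(1)\mu(2)\cdots} \prod_j s_{\mu(j)}[h_j]$. Pairing both sides with $s_\lambda$ gives precisely the coefficient above, which finishes the proof (the identity, proved here for $t \geq |\lambda|$, is all that is needed downstream).

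The main obstacle is the bookkeeping in the final step: matching the triple sum of Lemma~\ref{lem:Dkt}, in which the choice of $\nu$ covertly encodes the part-multiplicities $m_j$, against a single sum over sequences of partitions, and identifying that sum as the plethystic addition formula for $s_\mu$ evaluated at $h_1 + h_2 + \cdots$. A secondary point requiring brief care is checking that the two $\Sym_l$-actions and the two $\Sym_t$-actions on $T(l,t)$ used in the two decompositions literally coincide, so that the comparison takes place in a single copy of $\Rep(\Sym_l \times \Sym_t)$.
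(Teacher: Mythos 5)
Your proposal is correct and follows essentially the same route as the paper: decompose $T(l,t) = (\CC^t)^{\otimes l}$ both by Schur--Weyl duality and as $\bigoplus_{\nu} \DSh(\nu,t)$, extract the $\Sp_\lambda$-isotypic piece using Lemma~\ref{lem:Dkt}, and identify the resulting coefficient of $[M_\mu^t]$ via the plethystic addition formula $s_\mu[f+g] = \sum_{\alpha,\beta} c_{\alpha\beta}^\mu s_\alpha[f]s_\beta[g]$ applied to $h_1 + h_2 + \cdots$. The extra care you take in checking that the two $\Sym_l \times \Sym_t$-actions coincide, and in explaining how the sum over $\nu \vdash l$ together with the inner sums repackages as a sum over sequences $(\mu(j))$, is sound and makes the argument more self-contained than the paper's, but the strategy is identical.
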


\begin{proof}
  We begin with the decomposition
  \begin{equation}
  T(l,t) = \bigoplus_{|\nu| = l} \DSh(\nu, t) . \label{T decomposition}
  \end{equation}
  Both sides of this equation have compatible actions of $\Sym_{l} \times \Sym_t$. 
  On the left hand side, Schur-Weyl duality tells us that
  \begin{equation}
  T(l,t) = (\CC^t)^{\otimes l} \cong \bigoplus_{|\lambda|=l} \Sp_{\lambda} \boxtimes \SS_{\lambda}(\CC^t) . \label{Schur Weyl}
  \end{equation}
  So we can compute $\SS_{\lambda}(\CC^t)$ as the $\Sp_{\lambda}$-component of the right hand side of Eq.~\eqref{T decomposition}. 

From Eq.~\eqref{e:Dkt}, the coefficient of $\Sp_{\lambda}$ in $\DSh(\nu,t)$ is
\[
\sum_{\substack{ |\mu| = m \\ |\mu_{(j)}| = m_j}} c_{\mu_{(1)} \mu_{(2)} \cdots \mu_{(r)}}^{\mu} \langle s_{\lambda}, \prod_j s_{\mu_{(j)}}[h_j] \rangle \left[ M_{\mu}^t \right]
\]
where $\nu = r^{m_r} \cdots 2^{m_2} 1^{m_1}$. Thus the coefficient of $\Sp_{\lambda}$ in $T(l,t)$ is obtained by summing over $\nu$, which gives 
\[
\sum_{\substack{ |\mu| = m \\ \sum |\mu_{(j)}| = m}} \langle s_{\lambda}, c_{\mu_{(1)} \mu_{(2)} \cdots \mu_{(r)}}^{\mu} \prod_j s_{\mu_{(j)}}[h_j] \rangle \left[ M_{\mu}^t \right]  = \sum_{|\mu| = m} \langle s_{\lambda}, s_{\mu}[\sum h_j] \rangle \left[ M_{\mu}^t \right],
\]
where we use the identity $s_{\mu}[f+g] = \sum_{\alpha,\beta} c_{\alpha,\beta}^{\mu} s_{\alpha}[f] s_{\beta}[g]$ (see \cite[I(8.8)]{Mac95}). 
\end{proof}

Combining Propositions~\ref{prop:Sp2M} and \ref{prop:M2S}, we recover Littlewood's formula for $a_{\lambda}^{\nu}$.

\begin{corollary}
  In the representation ring $\Rep(S_t)$, we have
  \begin{eqnarray*}
    [ \SS^{\lambda} (\CC^t)] = \sum_{\nu} \langle s_{\lambda}, s_{\nu^{(t)}}[1 + h_1 + h_2 + \cdots] \rangle [\Sp_{\nu^{(t)}}] . 
  \end{eqnarray*}
\end{corollary}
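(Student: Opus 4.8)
The plan is to chain together the two propositions just proved: substitute the Specht-expansion of $[M_{\mu}^t]$ from Proposition~\ref{prop:Sp2M} into the $M_{\mu}^t$-expansion of $[\SS^{\lambda}(\CC^t)]$ from Proposition~\ref{prop:M2S}, collect the coefficient of each $[\Sp_{\nu^{(t)}}]$, and identify it with the plethystic inner product appearing in Littlewood's formula.

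First I would write, using Proposition~\ref{prop:M2S} and then Eq.~\eqref{e:M2SpStrip},
\[
[\SS^{\lambda}(\CC^t)] \;=\; \sum_{\mu} \langle s_{\lambda},\, s_{\mu}[h_1 + h_2 + h_3 + \cdots] \rangle \sum_{\mu/\nu\ \text{horiz.\ strip}} [\Sp_{\nu^{(t)}}],
\]
and interchange the two summations. This is harmless because $s_{\mu}[h_1 + h_2 + \cdots]$ has lowest-degree term $s_{\mu}[h_1] = s_{\mu}$, so $\langle s_{\lambda}, s_{\mu}[h_1 + h_2 + \cdots]\rangle = 0$ unless $|\mu| \le |\lambda|$ and every sum in sight is finite. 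Reading off the coefficient of $[\Sp_{\nu^{(t)}}]$, it remains to establish the symmetric-function identity
\[
\sum_{\mu/\nu\ \text{horiz.\ strip}} \langle s_{\lambda},\, s_{\mu}[h_1 + h_2 + \cdots] \rangle \;=\; \langle s_{\lambda},\, s_{\nu^{(t)}}[1 + h_1 + h_2 + \cdots] \rangle
\]
for $t$ sufficiently large.

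For the right-hand side I would expand the outer alphabet $1 + (h_1 + h_2 + \cdots)$ via the coproduct on symmetric functions: $s_{\kappa}[1 + (h_1 + h_2 + \cdots)] = \sum_{\alpha} s_{\alpha}[1]\, s_{\kappa/\alpha}[h_1 + h_2 + \cdots]$. Since $s_{\alpha}[1] = s_{\alpha}(1, 0, 0, \dots)$ equals $1$ when $\alpha$ is a single row and vanishes otherwise, and since $\sum_{k \ge 0} s_{\kappa/(k)} = \sum_{\kappa/\mu\ \text{horiz.\ strip}} s_{\mu}$ by Pieri's rule, this collapses to $s_{\kappa}[1 + h_1 + h_2 + \cdots] = \sum_{\kappa/\mu\ \text{horiz.\ strip}} s_{\mu}[h_1 + h_2 + \cdots]$. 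Taking $\kappa = \nu^{(t)}$ and pairing with $s_{\lambda}$ rewrites the right-hand side of the displayed identity as $\sum_{\nu^{(t)}/\mu\ \text{horiz.\ strip}} \langle s_{\lambda}, s_{\mu}[h_1 + h_2 + \cdots]\rangle$.

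It then remains only to match the two index sets, and this is the step I would be most careful about — although it is exactly the $t$-large reindexing already carried out in the proof of Proposition~\ref{prop:Sp2M}. The condition ``$\nu^{(t)}/\mu$ is a horizontal strip'' unpacks to $t - |\nu| \ge \mu_1 \ge \nu_1 \ge \mu_2 \ge \nu_2 \ge \cdots$; once $t \ge |\lambda| + |\nu|$ the first inequality holds automatically for every $\mu$ with $|\mu| \le |\lambda|$, while the remaining inequalities say precisely that ``$\mu/\nu$ is a horizontal strip''. Since only $\mu$ with $|\mu| \le |\lambda|$ contribute nonzero terms to either side, the two sums coincide, which proves the identity and hence the corollary. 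As a byproduct, the right-hand side of Littlewood's formula is now visibly independent of $t$ for $t \ge |\lambda| + |\nu|$, which reproves the stabilization of $a_{\lambda}^{\nu}(t)$.
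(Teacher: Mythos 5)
Your proof is correct and fills in exactly the details the paper leaves implicit when it asserts the corollary follows by ``combining Propositions~\ref{prop:Sp2M} and \ref{prop:M2S}.'' The substitution of Eq.~\eqref{e:M2SpStrip} into Proposition~\ref{prop:M2S}, the interchange of sums, the plethystic expansion $s_{\nu^{(t)}}[1+h_1+h_2+\cdots]=\sum_{\nu^{(t)}/\mu\ \text{horiz.\ strip}} s_{\mu}[h_1+h_2+\cdots]$, and the $t$-large reindexing of horizontal strips are all precisely the steps the paper intends, so your argument matches the paper's approach.
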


\section{Equivariant M\"obius inversion}

Our proof of the third part of Theorem~\ref{thm:MT FS} began with the identity $T(l,t) = \bigoplus_{|\nu| = l} \DSh(\nu, t)$. 
To establish the fourth, and most interesting, part we must invert this expression and write $D(l, t)$ as a ``linear combination" of the representations $T(\pi(\nu), t)$, where $\pi(\nu)$ is a set partition of shape $\nu$. 
We can use M\"obius inversion on the set partition lattice to compute the dimension of $D(l,t)$ as a linear combination of the dimensions of the representations $T(\pi, t)$. 
In order to obtain not just the dimension, but a formula for the class in $\Rep(\Sym_{l}\times \Sym_t)$, we need an equivariant version of M\"obius inversion. 
We find it clearest to explain this result in the context of a general poset with a group action.
Because we want to reserve $\mu$ for partitions, we will denote the M\"obius function of a poset by $\mmu$.

Let $P$ be a poset with unique minimal element $\ohat$, and let $G$ be a group acting on $P$. 
Let $V$ be a $G$-representation with a direct sum decomposition $V = \bigoplus_{p \in P} U_p$ such that $g(U_p) = U_{gp}$ for each $g \in G$ and $p \in P$. 
For $q \in P$, put $V_q := \bigoplus_{r \succeq q} U_r$.

For $p \in P$, let $(\ohat,p) = \{ q \in P  : \ohat \prec q \prec p \}$. 
Let $\Delta(\ohat,p)$ be the order complex of $(\ohat,p)$ -- the simplicial complex on the ground set $(\ohat,p)$ whose faces are the totally ordered subsets of $(\ohat,p)$. 
A classical result of P. Hall states \cite{Hal88} that the M\"obius function $\mmu(p)$ is the reduced Euler characteristic $\tilde{\chi}(\Delta(\ohat,p))$.

We will define an equivariant version of $\mmu$.
Namely, let $G_p$ be the stabilizer of $p$ and let $\Rep(G_p)$ be its representation ring. 
We define
\[ \meq(p) = \sum_j (-1)^{j+1} \left[ \widetilde{H}_j(\Delta(\ohat,p)) \right] \]
where $\widetilde{H}_j$ is the reduced homology group. 
So, under the map $\Rep(G_p) \to \mathbb{Z}$ sending a representation to its dimension, $\meq(p)$ is sent to the M\"obius function $\mmu(p)$. 
Among group theorists, $\meq(p)$ is called the ``Lefschetz element".

Let $G \backslash P$ be a set of orbit representatives for the action of $G$ on $P$.
Our equivariant M\"obius inversion formula is the following.

\begin{theorem}
  With the above definitions, we have the equality
  \begin{equation} [U_{\ohat}] = \sum_{p \in G \backslash P} \left[ \Ind_{G_p}^G \left(\meq(p) \otimes V_p\right) \right] \label{eqn:mobius} \end{equation}
  in the representation ring $\Rep(G)$.
  \label{thm:mobius}
\end{theorem}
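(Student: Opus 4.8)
The plan is to reduce the equivariant statement to the classical M\"obius inversion formula applied orbit-by-orbit, using the homology of order complexes as the bookkeeping device. The starting point is the observation that $U_{\ohat}$ is a direct summand of $V$, but it is more natural to work with the whole decomposition $V = \bigoplus_{p} U_p$ and the ``upper set'' spaces $V_q = \bigoplus_{r \succeq q} U_r$. Classically, one has $[V_{\ohat}] = [V] = \sum_{q} [U_q]$, and more generally the M\"obius relation $[U_p] = \sum_{q \succeq p} \mmu(p,q)\, [V_q]$ holds at the level of dimensions once we forget the group action; the content of the theorem is to upgrade the coefficient $\mmu(\ohat, q)$ (equivalently $\mmu(p)$ in the paper's normalization with $p$ replaced by an orbit representative and the sum reorganized) to the virtual $G_p$-representation $\meq(p)$, and to package the sum over all $q$ in a single $G$-orbit as an induction from $G_p$.

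First I would set up an auxiliary chain complex. For the interval $(\ohat, p)$ and its order complex $\Delta(\ohat,p)$, consider the (reduced, simplicial) chain complex $\widetilde{C}_\bullet(\Delta(\ohat,p))$ tensored over $\CC$ with the spaces $V_q$ in a way compatible with the face maps: concretely, one builds a complex whose term in homological degree $j$ is $\bigoplus$ over chains $\ohat \prec q_0 \prec q_1 \prec \cdots \prec q_j \prec p$ of the space $V_{q_0}$ (or, after a standard reindexing, a complex built from the $V_{q}$ for $q$ in the half-open interval). The point is that this is a complex of $G_p$-representations whose Euler characteristic in $\Rep(G_p)$ is, by the combinatorial definition of the order complex, exactly $\sum_{q : \ohat \prec q \preceq p} (\pm)\, [V_q]$ with the signs matching $\mmu$, while its homology is $\meq(p) \otimes V_p$ up to the relevant shift — this is where P.~Hall's theorem (\cite{Hal88}), in its equivariant/Lefschetz form, enters: the homology of the order complex controls the whole complex. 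Taking the alternating sum of homology equals the alternating sum of chains gives, in $\Rep(G_p)$, an identity expressing $[U_p]$ (or the orbit-summed version) in terms of the $[V_q]$ with coefficients $\meq$.

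The second half of the argument is the orbit bookkeeping. Summing the local identity over a set of orbit representatives $G\backslash P$ and using that induction $\Ind_{G_p}^G$ is additive and exact, together with the fact that $\bigoplus_{q \in Gp} V_q \cong \Ind_{G_p}^G V_p$ as $G$-representations (since $g(V_p) = V_{gp}$ by hypothesis), converts the orbit sum of the local terms into the right-hand side of \eqref{eqn:mobius}. One must be a little careful that the interval structure $(\ohat, q)$ is $G$-equivariant — i.e., $g$ maps $(\ohat, p)$ isomorphically to $(\ohat, gp)$ as posets, hence $\Delta(\ohat,p) \cong \Delta(\ohat,gp)$ $G_p$-equivariantly after passing to the stabilizer — so that $\meq$ is well-defined on orbits, but this is immediate from $g\ohat = \ohat$ and order-preservation. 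Finally, on the left-hand side, the $p = \ohat$ term: there one uses the convention that $\Delta(\ohat,\ohat)$ is the void complex with reduced Euler characteristic $1$, so $\meq(\ohat) = [\,\text{trivial}\,]$ and the term contributes $[V_{\ohat}] = [U_{\ohat}]$ only after the other terms cancel the $[U_q]$ for $q \succ \ohat$ — more precisely, the cleanest route is to prove the inversion formula in the form ``$[U_{\ohat}] = \sum_p \ldots$'' directly by induction on the poset, or by citing the standard fact that $\mmu$ is the two-sided inverse of the zeta function and checking that $\meq$ has the analogous property in $\Rep$.

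The main obstacle I anticipate is making the ``chain complex of representations'' rigorous and functorial: one needs the boundary maps of the order complex to be genuinely $G_p$-equivariant maps of the $V_q$-decorated chain groups, and one needs to know that passing to homology commutes with the Euler-characteristic computation in the representation ring (which is automatic since $\Rep$ is the Grothendieck group of an abelian category and Euler characteristic is a homology invariant there). A secondary subtlety is the degenerate intervals — when $(\ohat,p)$ is empty (i.e.\ $p$ covers $\ohat$) or when $p = \ohat$ — where the reduced order complex is respectively $\{\varnothing\}$ or void, and one must check the conventions so that $\meq$ specializes correctly to $\mmu$ on dimensions and so that the $p=\ohat$ term behaves as claimed. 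Once these conventions are pinned down, the proof is essentially the equivariant Euler-characteristic identity applied termwise over orbits.
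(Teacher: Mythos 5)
Your proposal gets the opening move right: replace the alternating sum of reduced homology groups by the alternating sum of chain groups (same class in $\Rep(G_p)$), and unpack the induction via $\Ind_{G_p}^G V_p \cong \bigoplus_{q \in Gp} V_q$. That is exactly the paper's first step. But there are two concrete problems after that.

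First, your auxiliary complex is set up incorrectly. In the term $\Ind_{G_p}^G(\meq(p)\otimes V_p)$, the factor $V_p$ is constant across all faces of $\Delta(\ohat,p)$: after expanding $\meq$ into chain groups, the degree-$j$ piece is $\bigoplus_{\ohat\prec q_0\prec\cdots\prec q_j\prec p}\CC\otimes V_p$, i.e.\ the coefficient should be $V_p$ (the top of the chain), not $V_{q_0}$. With $V_{q_0}$ as you wrote it, the putative differential would have to map $V_{q_0}\to V_{q_1}$ when the initial vertex is deleted, but the inclusion runs the wrong way ($V_{q_1}\subseteq V_{q_0}$), so the complex is not well defined, and there is no reason its homology would be $\meq(p)\otimes V_p$. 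Once $V_{p'}$ is the constant coefficient and one further expands $V_{p'}=\bigoplus_{q'\succeq p'}U_{q'}$, the sum to be evaluated is over chains $\ohat\prec q_0\prec\cdots\prec q_j\prec p'\preceq q'$ with coefficient $U_{q'}$.

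Second, and more seriously, you do not actually carry out the cancellation that is the heart of the theorem. Your two suggested routes — ``induction on the poset'' and ``check that $\meq$ is the two-sided inverse of the zeta function in $\Rep$'' — both founder on the same obstruction: the classical incidence-algebra argument uses the identity $[V_q]=\sum_{r\succeq q}[U_r]$, but this only holds as vector spaces, not in $\Rep(G_q)$, because the individual summands $U_r$ are not $G_q$-stable (only orbit-sums of them are). There is no ready-made incidence algebra over the varying rings $\Rep(G_p)$ in which $\meq$ inverts $\zeta$, and an induction on the poset would need exactly the same equivariant refinement of $\mmu * \zeta = \delta$ that is being proved. The paper closes this gap not by building any resolution but by computing characters: fix $g\in G$, observe that only chains consisting entirely of $g$-fixed poset elements contribute to $\Tr_g$, and then for each fixed $q'\neq\ohat$ the inner alternating sum is the reduced Euler characteristic of the order complex of $\{q\in P^g : \ohat\prec q\preceq q'\}$, which vanishes because $q'$ is a cone point. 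This cone-point vanishing is the equivariant surrogate for $\sum_{q\leq r}\mmu(q,r)=0$; it is the missing idea, and without it your argument does not close.
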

%
%
%
%

\begin{proof}
  We begin by expanding the induction. For a simplicial complex $X$, let $C_j(X)$ be the free vector space on the $j$-dimensional faces. Here we include the case $j=-1$, corresponding to the empty face. If a group $H$ acts on $X$,  we have $[\meq(X)] = \sum_{j \geq -1} (-1)^{j+1} [C_j(X)]$ in $\Rep(H)$, so the right hand side of~\eqref{eqn:mobius} is
  \[ \sum_{j \geq -1} (-1)^{j+1} \sum_{p \in G \backslash P} \left[  \Ind_{G_p}^G \ C_j(\Delta(\ohat,p)) \otimes  V_p   \right] . \]

  The induction can be expanded explicitly as 
  \[\sum_{p \in G \backslash P} \left[ \bigoplus_{p' \in Gp} C_j(\Delta(\ohat,p')) \otimes V_{p'}   \right] . \]
  Summing over $p \in G \backslash P$ and $p' \in Gp$ is simply summing over $p' \in P$. So we want to prove the equality
  \[ [U_{\ohat}] =  \sum_{j \geq -1} (-1)^{j+1} \left[ \bigoplus_{p' \in P} V_{p'} \otimes C_j(\Delta(\ohat,p')) \right] =  \sum_{j \geq -1} (-1)^{j+1} \left[ \bigoplus_{\ohat \prec q_0 \prec q_1 \prec \cdots \prec q_j \prec p' \in P} V_{p'}  \right] \]
in $\Rep(G)$. Inserting the definition of $V_{p'}$,  our goal is to show that
  \[ [U_{\ohat}] =  \sum_{j \geq -1} (-1)^{j+1} \left[ \bigoplus_{\ohat \prec q_0 \prec q_1 \prec \cdots \prec q_j \prec p' \preceq q'} U_{q'}  \right] . \]
  Here $G$ acts by permuting the summation indices and by its action on $V$. 
  
  In order to show equality in $\Rep(G)$, we simply need to compute characters of both sides. Fix $g \in G$; for any vector space $W$ on which $g$ acts, write $\Tr_{g}(W)$ for the trace of $g$ on $W$. Since the action of $g$ on $P$ is order preserving, if $g$ maps a $j$-cell $(q_0, q_1, \ldots, q_j)$ of $\Delta(\ohat,p)$ to itself, it does so while preserving the order of $(q_0, q_1, \ldots, q_j)$. So the only terms that contribute to the trace are those where each of the summation variables $q_0$, $q_1$, \dots, $q_j$, $p'$ and $q'$ are individually fixed by $g$. 
  We obtain that the trace of $g$ on the right hand side is
  \[  \sum_{j \geq -1} (-1)^{j+1} \hspace{-1ex} \sum_{\substack{ q_0, q_1, \ldots, q_j, p', q' \in P^g \\ \ohat \prec q_0 \prec q_1 \prec \cdots \prec q_j \prec p' \preceq q'}} \hspace{-0.5ex} \Tr_{g}(U_{q'}) =
  \sum_{q' \in P^g} \Tr_{g}(U_{q'}) \big( \sum_{j \geq -1} \hspace{-1ex} \sum_{\substack{ q_0, q_1, \ldots, q_j \in P^g \\ \ohat \prec q_0 \prec q_1 \prec \cdots \prec q_j \prec p' \preceq q'  }} \hspace{-0.5ex} (-1)^{j+1} \big) . \]
   The quantity in parentheses in the reduced Euler characteristic of the order complex $\Delta\left( \{ q \in P^g : \ohat \prec q \preceq q' \} \right)$.
  For $q' \neq \ohat$, the point $q'$ is a cone point of the order complex, so this Euler characteristic is $0$.
  Thus, the sum simplifies to $\Tr_{g}(U_{\ohat})$, as desired.
\end{proof}

Our immediate purpose is to apply Theorem~\ref{thm:mobius} to the partition lattice $\Pi_m$, ordered by refinement with minimal element $\Fine_m$ and ranked by $m$ minus the number of blocks of the set partition. The group $\Sym_{m}$ acts by permuting elements within blocks. We can identify $\Sym_m \backslash \Pi_m$ with the set of integer partitions of $m$: for each integer partition $\nu$ of $m$, choose a set partition $\pi(\nu)$ of that shape. We will abbreviate the stabilizer $G_{\pi(\nu)}$ to simply $G_{\nu}$ and the equivariant M\"obius function $\meq(\pi(\nu))$ to simply $\meq(\nu)$. We may extend this action to an $\Sym_m \times \Sym_t$ action where the second factor acts trivially, and in so doing the corresponding objects for the action of $\Sym_m \times \Sym_t$ become $G_{\nu} \times \Sym_t$ and $\meq(\nu) \boxtimes \One$, respectively.

Recall the minimal element of $\Pi_m$ is $\Fine_{m}$, and $D(\Fine_m,t) = D(m, t)$. Applying Theorem~\ref{thm:mobius} to the representation $T(\pi,t) = \bigoplus_{\rho \succeq \pi} D(\rho,t)$ gives the following.

\begin{corollary}
  In the representation ring $\Rep(\Sym_{m}\times\Sym_{t})$, we have
  \begin{equation}
    [D(m,t)] = \sum_{|\nu|=m} \left[ \Ind_{G_{\nu}\times\Sym_{t}}^{\Sym_{m}\times\Sym_{t}} \left((\meq(\nu)\boxtimes\One) \otimes T(\pi(\nu),t) \right) \right] 
    \label{Mobius for D}
  \end{equation}
  where $\pi(\nu)$ is a set partition of shape $\nu$.
  \label{cor:Mobius for D}
\end{corollary}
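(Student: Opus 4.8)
The plan is to obtain \eqref{Mobius for D} as a direct instance of Theorem~\ref{thm:mobius}. I would take for the poset $P$ the partition lattice $\Pi_m$ ordered by refinement, with unique minimal element $\ohat = \Fine_m$, and for the group $G$ the product $\Sym_m \times \Sym_t$, where $\Sym_m$ acts on $\Pi_m$ in the natural way (permuting the ground set $\{1,2,\ldots,m\}$) and $\Sym_t$ acts trivially. For the ambient $G$-representation I would take $V = T(m,t)$ with its decomposition $V = \bigoplus_{\pi \in \Pi_m} D(\pi,t)$, so that $U_\pi := D(\pi,t)$.

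First I would verify the compatibility hypothesis $g(U_\pi) = U_{g\pi}$. An element $\sigma \in \Sym_m$ sends $[i_1 i_2 \cdots i_m]$ to $[i_{\sigma^{-1}(1)} \cdots i_{\sigma^{-1}(m)}]$, which carries the span of words with equality pattern $\pi$ onto the span of those with pattern $\sigma\pi$; an element $\tau \in \Sym_t$ relabels the alphabet and therefore preserves each equality pattern, fixing every $U_\pi$. Thus the decomposition is $G$-stable with the required equivariance, and it satisfies $V_q = \bigoplus_{\rho \succeq q} U_\rho = T(q,t)$ by the identity $T(\pi,t) = \bigoplus_{\rho\succeq\pi} D(\rho,t)$ recorded earlier, while $U_{\ohat} = D(\Fine_m,t) = D(m,t)$ (using $t \geq m$).

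It then remains to unwind the right-hand side of \eqref{eqn:mobius}. Since $\Sym_t$ acts trivially on $\Pi_m$, the orbit set $G\backslash P$ agrees with $\Sym_m\backslash\Pi_m$, which is in bijection with the integer partitions $\nu$ of $m$ via a chosen representative $\pi(\nu)$ of each shape. For such a representative, the stabilizer in $\Sym_m \times \Sym_t$ is $G_\nu \times \Sym_t$; and because $\Sym_t$ acts trivially on $\Pi_m$ it acts trivially on the order complex $\Delta(\ohat,\pi(\nu))$ and on all its reduced homology, so the Lefschetz element of this stabilizer is exactly $\meq(\nu)\boxtimes\One$. Finally $V_{\pi(\nu)} = T(\pi(\nu),t)$. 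Substituting all of this into \eqref{eqn:mobius} produces \eqref{Mobius for D}.

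I do not expect a genuine obstacle here: all of the content is carried by Theorem~\ref{thm:mobius}, and this step is essentially bookkeeping. The one point to be careful about is handling the trivial $\Sym_t$-factor consistently throughout — it must act trivially on the indexing poset $\Pi_m$, hence on the order complexes (so that the equivariant M\"obius function carries only a genuine $G_\nu$-action and enters as $\meq(\nu)\boxtimes\One$), while at the same time acting nontrivially on the summands $U_\pi = D(\pi,t)$ themselves — together with the routine observation that the $\Sym_m$-orbits of set partitions of $\{1,\ldots,m\}$ are classified precisely by their shape $\nu$.
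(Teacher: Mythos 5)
Your proposal matches the paper's own argument exactly: both instantiate Theorem~\ref{thm:mobius} with $P = \Pi_m$, $G = \Sym_m \times \Sym_t$ (second factor acting trivially on $P$), $V = T(m,t)$, $U_\pi = D(\pi,t)$, and $V_\pi = T(\pi,t)$, and then identify the orbit representatives with integer partitions $\nu$ and the stabilizers with $G_\nu \times \Sym_t$. The paper presents this as an immediate application and leaves the equivariance check and the bookkeeping about $\meq(\nu)\boxtimes\One$ implicit; you have simply written those routine verifications out.
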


\section{Expansion of $M_{\mu}^t$ in the basis $\SS_{\lambda}(\CC^t)$} 

We now prove the last part of Theorem~\ref{thm:MT FS}. Following the proof paradigm for the expansion of $\SS_{\lambda}(\CC^t)$ in the basis $M_{\mu}^t$, we will express the $\Sym_m \times \Sym_t$ representation $D(m, t)$ in two ways. Recall that $D(m, t) = D(\Fine_m,t) = \DSh((1^m),t)$. By Lemma~\ref{lem:Dkt}, we have the following.

\begin{proposition}
  As an $\Sym_m \times \Sym_t$ representation, we have
  \begin{equation}
    D(m,t) \cong \bigoplus_{|\mu| = m} \Sp_{\mu} \boxtimes M_{\mu}^t .
    \label{Schur Weyl for D}
  \end{equation}
\end{proposition}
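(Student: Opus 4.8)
The plan is to specialize Lemma~\ref{lem:Dkt} to the case $\nu = (1^m)$, which is precisely the partition with $m_1 = m$ and $m_j = 0$ for all $j \geq 2$. In that case the only index $j$ with $m_j \neq 0$ is $j = 1$, so the nested sum over tuples $(\mu(1), \mu(2), \ldots, \mu(r))$ collapses to a single partition $\mu(1)$ of size $m_1 = m$, and the Littlewood--Richardson coefficient $c^{\mu}_{\mu(1)}$ forces $\mu(1) = \mu$. Moreover the plethysm $s_{\mu(1)}[h_1] = s_{\mu}[h_1] = s_{\mu}$ since $h_1 = p_1$ acts as the identity under plethysm. Hence the inner product $\langle s_{\lambda}, \prod_j s_{\mu(j)}[h_j] \rangle = \langle s_{\lambda}, s_{\mu} \rangle = \delta_{\lambda \mu}$.

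Carrying out these simplifications, the right-hand side of Eq.~\eqref{e:Dkt} becomes $\sum_{|\mu| = m} \delta_{\lambda\mu} [\Sp_{\lambda} \boxtimes M_{\mu}^t]$ summed appropriately over $\lambda$, i.e.\ exactly $\bigoplus_{|\mu|=m} \Sp_{\mu} \boxtimes M_{\mu}^t$. This is the claimed isomorphism in the representation ring; since both sides are genuine representations (not virtual ones) and their classes agree, they are isomorphic as $\Sym_m \times \Sym_t$ representations. Alternatively, one could bypass Lemma~\ref{lem:Dkt} and argue directly: $D(m,t) = D(\Fine_m, t)$ has basis the tensors $[i_1 \cdots i_m]$ with all $i_k$ distinct, which is the permutation module on injections $\{1,\ldots,m\} \hookrightarrow \{1,\ldots,t\}$; as an $\Sym_m \times \Sym_t$-set this is $(\Sym_m \times \Sym_t)/(\Delta\Sym_m \times \Sym_{t-m})$ where $\Delta\Sym_m$ is the diagonal, so $D(m,t) \cong \Ind_{\Sym_m \times \Sym_{t-m}}^{\Sym_m \times \Sym_t}(\CC\Sym_m \boxtimes \One_{t-m})$; decomposing the regular representation $\CC\Sym_m \cong \bigoplus_{|\mu|=m} \Sp_\mu \boxtimes \Sp_\mu$ and recognizing $\Ind_{\Sym_m \times \Sym_{t-m}}^{\Sym_m \times \Sym_t}(\Sp_\mu \boxtimes \One_{t-m}) = \Sp_\mu \boxtimes M_\mu^t$ (by the definition~\eqref{e:Mmu}, applied in the second tensor factor) yields the result.

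There is essentially no obstacle here; this is a bookkeeping specialization of the already-proved Lemma~\ref{lem:Dkt}. The only point requiring a word of care is the observation that $s_\mu[h_1] = s_\mu$, which follows because $h_1 = p_1$ and plethysm with $p_1$ is the identity operation, or equivalently because $s_\mu[h_1]$ is the Frobenius characteristic of $\Ind_{\Wr(1,m)}^{\Sym_m}\Sp_\mu$ and $\Wr(1,m) = \Sym_m$. Given how brief this is, I would present the direct permutation-module argument as the main proof and mention the specialization of Lemma~\ref{lem:Dkt} as the one-line alternative, since the direct argument is self-contained and makes transparent why the diagonal $\Sym_m$ produces the regular representation.
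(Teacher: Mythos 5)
Your first approach---specializing Lemma~\ref{lem:Dkt} to $\nu = (1^m)$ and simplifying via $c^{\mu}_{\mu(1)} = \delta_{\mu,\mu(1)}$, $s_{\mu}[h_1] = s_{\mu}$, and orthonormality of Schur functions---is exactly the paper's proof. Your alternative direct argument is also correct and worth comparing: identifying $D(m,t)$ as the permutation module on injections $\{1,\ldots,m\} \hookrightarrow \{1,\ldots,t\}$, with point stabilizer $\Delta\Sym_m \times \Sym_{t-m}$ (the diagonal copy of $\Sym_m$ in $\Sym_m \times (\Sym_m \times \Sym_{t-m}) \subseteq \Sym_m \times \Sym_t$, times $\Sym_{t-m}$), then inducing in two stages---first across the diagonal to get the bimodule $\CC\Sym_m \cong \bigoplus_{\mu} \Sp_{\mu} \boxtimes \Sp_{\mu}$ by Peter--Weyl and self-duality, then from $\Sym_m \times \Sym_{t-m}$ to $\Sym_t$ to produce $M_{\mu}^t$ from the definition~\eqref{e:Mmu}---gives the result without invoking the full Lemma~\ref{lem:Dkt}. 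This direct route is really the $j=1$ special case of the wreath-product induction in Lemma~\ref{key wreath induction}, so it reuses the same underlying idea (Peter--Weyl across a diagonal subgroup) that powers the general lemma, but applied at its simplest. It buys self-containedness and makes the appearance of the regular representation transparent; the paper's route buys economy, since Lemma~\ref{lem:Dkt} is already established and needed elsewhere. One small notational point: in the displayed induction $\Ind_{\Sym_m \times \Sym_{t-m}}^{\Sym_m \times \Sym_t}(\CC\Sym_m \boxtimes \One_{t-m})$ you should make explicit that the inducing subgroup is $\Sym_m \times (\Sym_m \times \Sym_{t-m})$, with $\CC\Sym_m$ carrying the action of the first two factors and $\One_{t-m}$ the third; as written the domain of induction looks like a proper subgroup of $\Sym_m \times \Sym_t$ of the wrong index.
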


\begin{proof}
  For $\nu = (1^m)$, Lemma~\ref{lem:Dkt} gives
  \[ [ \DSh((1^m),t) ] = \sum_{|\lambda| = |\mu| = m} \sum_{\substack{|\mu_{(1)}| = m}} c_{\mu_{(1)}}^{\mu} \langle s_{\lambda}, s_{\mu_{(1)}}[h_1] \rangle \left[ \Sp_{\lambda} \boxtimes M_{\mu}^t \right] . \]
  Using $c_{\mu_{(1)}}^{\mu} = \delta_{\mu,\mu_{(1)}}$ and $s_{\mu_{(1)}}[h_1] = s_{\mu_{(1)}}$ and $\langle s_{\lambda}, s_{\mu} \rangle = \delta_{\lambda,\mu}$ gives the result.
\end{proof}
  

In particular, $M_{\mu}^t$ can be identified with the $\Sp_{\mu}$-isotypic component of $D(m,t)$. On the other hand, equivariant M\"obius inversion gives the following.

\begin{lemma}
  In the representation ring $\Rep(\Sym_{m}\times\Sym_{t})$, we have
  \begin{equation}
    [D(m,t)] = \sum_{|\nu|=m} \sum_{|\lambda| = \ell(\nu)}  \left[ \Ind_{G_{\nu}}^{\Sym_m} (\meq(\nu) \otimes \Sp_{\lambda}) \boxtimes \SS^{\lambda}(\CC^t) \right] .
    \label{Distribute Mobius D}
  \end{equation}
  where $\pi(\nu)$ is a set partition of shape $\nu$.
\end{lemma}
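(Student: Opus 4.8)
The plan is to feed the concrete description of $T(\pi(\nu),t)$ as an ``ambient'' permutation-type module into the equivariant M\"obius inversion already recorded in Corollary~\ref{cor:Mobius for D}, namely
\[ [D(m,t)] = \sum_{|\nu|=m} \left[ \Ind_{G_{\nu}\times\Sym_{t}}^{\Sym_{m}\times\Sym_{t}} \left((\meq(\nu)\boxtimes\One) \otimes T(\pi(\nu),t) \right) \right] , \]
and then to rewrite $T(\pi(\nu),t)$ in a shape to which Schur--Weyl duality applies. First I would identify $T(\pi(\nu),t)$, as a $G_{\nu}\times\Sym_{t}$-representation, with $T(\ell(\nu),t) = (\CC^{t})^{\otimes\ell(\nu)}$. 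A basis vector $[i_{1}i_{2}\cdots i_{m}]\in T(\pi(\nu),t)$ is determined by the common value of the $i_{p}$ on each of the $\ell(\nu)$ blocks of $\pi(\nu)$; ordering these blocks so that blocks of equal size are consecutive and sending $[i_{1}\cdots i_{m}]$ to $\mathbf{e}_{j_{1}}\otimes\cdots\otimes\mathbf{e}_{j_{\ell(\nu)}}$, with $j_{k}$ the value on the $k$-th block, gives an isomorphism of vector spaces. Under it the $\Sym_{t}$-action matches the diagonal $\Sym_{t}$-action on $(\CC^{t})^{\otimes\ell(\nu)}$, while the $G_{\nu}$-action factors through the quotient $G_{\nu}\twoheadrightarrow\prod_{j}\Sym_{m_{j}}$ (permuting elements within a single block fixes the basis vector), where $\nu = r^{m_{r}}\cdots 1^{m_{1}}$, and $\prod_{j}\Sym_{m_{j}}$ sits inside $\Sym_{\ell(\nu)}$ as the Young subgroup $\Sym_{m_{1}}\times\Sym_{m_{2}}\times\cdots$ acting on $(\CC^{t})^{\otimes\ell(\nu)}$ by permuting tensor factors. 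Thus $T(\pi(\nu),t)$ is the restriction of $T(\ell(\nu),t)$ along the natural homomorphism $G_{\nu}\times\Sym_{t}\to\Sym_{\ell(\nu)}\times\Sym_{t}$.

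Next I would apply Schur--Weyl duality in the form of~\eqref{Schur Weyl}: as $\Sym_{\ell(\nu)}\times\Sym_{t}$-representations, $T(\ell(\nu),t)\cong\bigoplus_{|\lambda|=\ell(\nu)}\Sp_{\lambda}\boxtimes\SS_{\lambda}(\CC^{t})$. Restricting along $G_{\nu}\times\Sym_{t}\to\Sym_{\ell(\nu)}\times\Sym_{t}$ and tensoring with $\meq(\nu)\boxtimes\One$, and using that $\One$ is trivial on $\Sym_{t}$ so that the second tensor factor is unaffected, yields
\[ (\meq(\nu)\boxtimes\One)\otimes T(\pi(\nu),t) \;\cong\; \bigoplus_{|\lambda|=\ell(\nu)}\bigl(\meq(\nu)\otimes\Sp_{\lambda}\bigr)\boxtimes\SS_{\lambda}(\CC^{t}) , \]
where $\Sp_{\lambda}$ is now viewed as a virtual $G_{\nu}$-representation through $G_{\nu}\to\Sym_{\ell(\nu)}$. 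Finally, since $\Ind_{G_{\nu}\times\Sym_{t}}^{\Sym_{m}\times\Sym_{t}}(A\boxtimes B)\cong(\Ind_{G_{\nu}}^{\Sym_{m}}A)\boxtimes B$ because the second factor is already the full group $\Sym_{t}$, applying $\Ind$ to each summand and summing over $\nu$ gives exactly~\eqref{Distribute Mobius D}.

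The main obstacle, such as it is, lies entirely in the first step: one must check carefully that the block-to-tensor-slot identification is equivariant for the quotient action of $G_{\nu}$ --- that is, that $G_{\nu}$ acts through $\prod_{j}\Sym_{m_{j}}$ and that this group embeds in $\Sym_{\ell(\nu)}$ as the stated Young subgroup --- and this is also where the meaning of ``$\Sp_{\lambda}$ as a $G_{\nu}$-representation'' in the statement gets pinned down. Everything after that is formal manipulation in the representation ring using Corollary~\ref{cor:Mobius for D} and Schur--Weyl duality, so I do not anticipate any further difficulty.
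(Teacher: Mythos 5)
Your proposal takes essentially the same route as the paper: start from Corollary~\ref{cor:Mobius for D}, identify $T(\pi(\nu),t)$ with $(\CC^t)^{\otimes\ell(\nu)}$, apply Schur--Weyl duality, and pull the resulting $\Sym_{\ell(\nu)}$-action on $\Sp_\lambda$ back along the block-permutation map $G_\nu \to \Sym_{\ell(\nu)}$ before inducing up to $\Sym_m$. The extra care you take with the explicit block-to-tensor-slot isomorphism and the factorization of $G_\nu$ through $\prod_j \Sym_{m_j}$ is precisely the detail the paper's terse proof elides, and it is correct.
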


\begin{proof}
  For $\nu$ a partition of $m$ with length $\ell$ and $\pi$ a set partition of shape $\nu$, as an $\Sym_t$ module, $T(\pi,t)$ is $(\CC^t)^{\otimes \ell}$. By Schur-Weyl duality, as an $\Sym_{\ell}\times\Sym_t$ module, this becomes $\bigoplus_{|\lambda| = \ell} \Sp_{\lambda} \boxtimes \SS^{\lambda}(\CC^t)$. Writing $\nu = 1^{m_1} 2^{m_2} \cdots r^{m_r}$, with notation as in Corollary~\ref{cor:Mobius for D}, the stabilizer of $\nu$ is $G_{\nu} = \prod \Wr(j, m_j) \subset \prod \Sym_{j m_j} \subset \Sym_m$. The group $G_{\nu}$ acts on the set of blocks of $\pi(\nu)$, giving a map $G_{\nu} \to \Sym_{\ell}$. Combining this with the action of $\Sym_{\ell}$ on $\Sp_{\lambda}$ turns Eq.~\eqref{Mobius for D} into Eq.~\eqref{Distribute Mobius D}, as desired.
\end{proof}
  
At this point, we can prove $[M_{\mu}^t]$ is alternating in the $[\SS_{\lambda}(\CC^t)]$.
By~\eqref{Schur Weyl for D}, $M_{\mu}^t$ can be identified with the $\Sp_{\mu}$-isotypic component of $D(m,t)$, where $m = |\mu|$. 
Combining this observation with~\eqref{Distribute Mobius D}, the coefficient of $[\SS_{\lambda}(\CC^t)]$ in $[M_{\mu}^t]$ is the coefficient of $[\Sp_{\mu}]$ in $\sum_{\nu} \Ind_{G_{\nu}}^{\Sym_m} (\meq(\nu) \otimes [\Sp_{\lambda}])$. For $\lambda$ and $\mu$ fixed, the only terms that contribute to this coefficient are partitions $\nu$ with $|\nu| = |\mu|$ and $\ell(\nu) = |\lambda|$. But $|\mu| - |\lambda| = |\nu| - \ell(\nu)$ is precisely the rank function that grades the lattice of set partitions. So only terms at one fixed level of $\Pi_m$ will contribute. Since $\Pi_m$ is Cohen-Macaulay \cite{Bjo80}, the terms $\meq(\nu)$ will all come with the same sign $(-1)^{|\mu| - |\lambda|}$, proving the multiplicity of $[\SS_{\lambda}(\CC^t)]$ in $[M_{\mu}^t]$ has sign $(-1)^{|\mu| - |\lambda|}$, as promised. 

Our final task is to establish our plethystic formula for the coefficient of $[\SS_{\lambda}(\CC^t)]$ in $[M_{\mu}^t]$ and thus in $[\Sp_{\nu^{(t)}}]$ in $\Rep(\Sym_t)$. For this, we must understand the equivariant M\"obius function for intervals in the partition lattice $\Pi_m$.

Bj\"orner \cite{Bjo80} showed that the order complex of $\Pi_m$ is Cohen--Macaulay, and so has reduced homology only in the top dimension. For $\pi$ a set partition of shape $\nu = 1^{m_1} 2^{m_2} \cdots r^{m_r}$, the interval $[\Fine_m,\pi]$ in $\Pi_m$ is isomorphic to $\Pi_1^{\times m_1} \times \Pi_2^{\times m_2} \times \cdots \times \Pi_r^{\times m_r}$. Thus, $\widetilde{H}_{\ast}(\Delta(\Fine_m, \pi))$ is concentrated in the top degree $|\nu| - \ell(\nu) - 2$ and, as a vector space, is isomorphic to $\bigotimes_j \widetilde{H}_{j-3}(\Pi_{j})^{\otimes m_j}$. 
Sundaram and Welker~\cite{SW96} described the action of $G_{\nu}$ on this vector space:
%

\begin{theorem}[\cite{SW96}]
  With notation as above, as a $G_{\nu}$-module we have,
  \begin{equation}
    \widetilde{H}_{|\nu| - \ell(\nu) -2} (\Delta(\Fine_m, \pi)) \cong 
    (Q_1 \emph{\S} \One) \otimes (Q_2 \emph{\S} \epsilon) \otimes (Q_3 \emph{\S} \One) \otimes \cdots \label{SWFormula} 
  \end{equation}
  where $\epsilon$ is the sign representation and $Q_j$ is the $\Sym_j$ representation on $\widetilde{H}_{j-3}(\Pi_{j})$.
\end{theorem}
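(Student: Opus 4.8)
The plan is to follow the argument of Sundaram and Welker \cite{SW96}. The first step is to use the $G_\nu$-equivariant poset isomorphism already recorded above, $[\Fine_m,\pi]\cong\prod_{j\geq 2}\Pi_j^{\times m_j}$, in which the $j$-th group of factors is indexed by the size-$j$ blocks of $\pi$, the base subgroup $\Sym_j^{\times m_j}\subset\Wr(j,m_j)$ acts on each $\Pi_j$ by permuting the elements of the corresponding block, and $\Sym_{m_j}$ permutes the $m_j$ copies; singleton blocks contribute only trivial one-element factors, on which $\Wr(1,m_1)=\Sym_{m_1}$ acts trivially, which will account for the $j=1$ term $Q_1\S\One$ of \eqref{SWFormula}. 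So $\Delta(\Fine_m,\pi)$ is the order complex of the proper part of a product of $N:=\ell(\nu)-m_1$ bounded posets.

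Next I would invoke the equivariant form of Walker's homotopy decomposition of the proper part of a product: for bounded posets $P_1,\dots,P_N$, writing $\overline{P}$ for the proper part of a bounded poset $P$, one has
\[ \Delta\bigl(\overline{P_1\times\cdots\times P_N}\bigr)\;\simeq\;S^{N-2}\ast\Delta(\overline{P_1})\ast\cdots\ast\Delta(\overline{P_N}), \]
with the equivalence compatible with automorphisms of the $P_i$ and with permutations of isomorphic factors, a factor permutation acting on the extra sphere $S^{N-2}$ through the reflection representation of $\Sym_N$ on $\CC^N/\CC$. (At the level of Euler characteristics this is the multiplicativity $\mmu_{P_1\times\cdots\times P_N}=\prod_i\mmu_{P_i}$ of the M\"obius function; the equivariant refinement comes from the explicit homotopies.) Applying reduced homology over $\CC$ and using that each $\Pi_j$ is Cohen--Macaulay \cite{Bjo80}, so that $\widetilde H_\ast(\Delta(\overline{\Pi_j}))$ is concentrated in degree $j-3$ where it equals $Q_j$, the K\"unneth formula for joins (no Tor over a field, and concentration in a single degree leaving exactly one summand) yields
\[ \widetilde H_{|\nu|-\ell(\nu)-2}\bigl(\Delta(\Fine_m,\pi)\bigr)\;\cong\;\widetilde H_{N-2}(S^{N-2})\otimes\bigotimes_{j\geq 2}Q_j^{\otimes m_j}, \]
the degree count returning exactly $|\nu|-\ell(\nu)-2$.

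Finally I would compute the $G_\nu$-action on the right-hand side. The base subgroups act factorwise on $\bigotimes_j Q_j^{\otimes m_j}$ and trivially on the one-dimensional space $\widetilde H_{N-2}(S^{N-2})$. A transposition in the factor-permuting $\Sym_{m_j}\subset\Wr(j,m_j)$ acts on $\widetilde H_{N-2}(S^{N-2})$ by $-1$ (a transposition is a reflection on $\CC^N/\CC$) and on $\bigotimes_j Q_j^{\otimes m_j}$ by swapping the two relevant copies of $Q_j$, twisted by the Koszul sign $(-1)^{j-2}$ arising in the chain-level K\"unneth isomorphism for the join, where the degree-$(j-3)$ homology of $\Delta(\overline{\Pi_j})$ contributes in degree $j-2$; the product is $(-1)^{j-1}$. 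Hence the $\Wr(j,m_j)$-module structure is the natural action on the $j$-th tensor factor with the $\Sym_{m_j}$-part twisted by $\epsilon^{j-1}$, i.e.\ $Q_j\S\One$ for $j$ odd and $Q_j\S\epsilon$ for $j$ even, and tensoring over $j$ gives \eqref{SWFormula}. The hard part will be the second step --- making Walker's equivalence equivariant and keeping all the signs consistent; the $\epsilon$'s for even $j$ are forced by the odd parity of $j-3$ together with the reflection sign on the $S^{N-2}$ factor. Granting that, the rest is the usual K\"unneth bookkeeping, made clean by the Cohen--Macaulayness of the $\Pi_j$.
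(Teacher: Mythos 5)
The paper does not prove this statement; it cites it from Sundaram and Welker \cite{SW96}, with the preceding paragraph supplying only the nonequivariant facts (the product decomposition of the interval and Cohen--Macaulayness, hence concentration of homology in the top degree). Your reconstruction follows the same route and is correct as a sketch. In particular the sign bookkeeping checks out: a transposition in $\Sym_{m_j}\subset\Wr(j,m_j)$ acts on the auxiliary sphere $S^{N-2}$ by $-1$ (a reflection in the permutation representation modulo the diagonal, equivalently the $\Sym_N$-action on top homology of the boolean factor), and on the join of the order complexes by the Koszul sign $(-1)^{(j-2)^2}=(-1)^{j-2}$ (two degree-$(j-3)$ classes, each occupying a block of $j-2$ vertices in the join, are transposed), for a net twist $(-1)^{j-1}$ --- trivial for $j$ odd, sign for $j$ even, matching the alternating pattern of $\One$'s and $\epsilon$'s in \eqref{SWFormula}.

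You are right to flag that the genuine content is the equivariant refinement of Walker's join decomposition together with the identification of the $\Sym_N$-action on the extra sphere; that is precisely what \cite{SW96} supplies, so granting that citation the argument is complete. One small convention point: for $j=1$ the open interval $(\Fine_1,\Coarse_1)$ is degenerate, and $Q_1$ must be taken to be the trivial one-dimensional representation by fiat; your exclusion of the singleton blocks via $N=\ell(\nu)-m_1$ handles this correctly and is consistent with reading $Q_1\S\One$ as simply the trivial $\Sym_{m_1}$-module.
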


%

Recall the \emph{Whitehouse module} $\Wh_m$ \cite{RW96} is the part of the free Lie algebra on $x_1$, \ldots, $x_{m}$ spanned by commutators of the form $[ \cdots [[x_{w(1)}, x_{w(2)}], x_{w(3)}],\ldots,x_{w(m)}]$ for $w \in \Sym_{m}$. It has dimension $(m-1)!$, with a basis given by those commutators with $w(1)=1$. Brandt \cite{Bra44} showed that the Frobenius character of the Whitehouse module $\Wh_m$ is precisely the Lyndon symmetric function, i.e. $L_m = \mathrm{ch}(\Wh_m)$. 

Stanley~\cite{Sta82}, based on earlier computations of Hanlon \cite{Han81}, showed $\widetilde{H}_i(\Pi_{m})$ relates to the Whitehouse module $\Wh_m$ as follows. 

\begin{theorem}[\cite{Sta82}]
  The $\Sym_{m}$ representation on $\widetilde{H}_i(\Pi_{m})$ vanishes for $i\neq m-3$, and for $i=m-3$ is given by
  \begin{equation}
    Q_m \cong \epsilon \otimes \Wh_{m}
  \label{e:lattice}
  \end{equation}
  where $\epsilon$ is the sign representation of $\Sym_{m}$ and $\Wh_{m}$ is the Whitehouse module.
  \label{thm:lattice}
\end{theorem}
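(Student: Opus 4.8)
The plan is to prove the two assertions of the theorem essentially separately: the vanishing of $\widetilde{H}_i(\Pi_m)$ for $i \neq m-3$, and the identification of $Q_m := \widetilde{H}_{m-3}(\Pi_m)$ with $\epsilon \otimes \Wh_m$. The vanishing is immediate from the Cohen--Macaulay property already invoked above: by Bj\"orner's shellability of $\Pi_m$~\cite{Bjo80}, the order complex $\Delta(\Fine_m,\Coarse_m)$ of the proper part is Cohen--Macaulay of dimension $m-3$, so its reduced homology is concentrated in degree $m-3$, and $Q_m$ is free of rank $|\mmu_{\Pi_m}(\Fine_m,\Coarse_m)| = (m-1)!$. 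Consequently, for every $w \in \Sym_m$ the alternating trace $\sum_i (-1)^i \chi_{\widetilde{H}_i(\Pi_m)}(w)$ collapses to the single term $(-1)^{m-1}\chi_{Q_m}(w)$, so it remains only to compute the character of $Q_m$, which I would do following Hanlon~\cite{Han81} and Stanley~\cite{Sta82}.

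For the character I would run the equivariant bookkeeping from the proof of Theorem~\ref{thm:mobius}. Since $w$ acts on $\Delta(\Fine_m,\Coarse_m)$ by an order-preserving simplicial map, any face it fixes setwise is fixed vertexwise, so by the Hopf trace formula $\sum_i (-1)^i \chi_{\widetilde{H}_i(\Pi_m)}(w)$ equals the reduced Euler characteristic of the subcomplex of $w$-invariant chains; this is the order complex of the fixed subposet $\Pi_m^w := \{q \in \Pi_m : w q = q\}$, and by P.\ Hall's theorem it equals $\mmu_{\Pi_m^w}(\Fine_m,\Coarse_m)$. The next step is to evaluate this M\"obius number: a $w$-invariant set partition amounts to a set partition of the cycles of $w$ together with, on each block, a compatible ``cyclic merging'' of the cycles it collects, and no such merging exists when the collected cycles have differing lengths. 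Carrying out Hanlon's analysis of these subposets shows $\mmu_{\Pi_m^w}(\Fine_m,\Coarse_m) = 0$ unless all cycles of $w$ share one common length $d$, i.e.\ $w$ has cycle type $d^{m/d}$, in which case it equals $\mmu_{\Pi_{m/d}}(\Fine_{m/d},\Coarse_{m/d}) \cdot d^{m/d - 1}\mu(d)$, the arithmetic M\"obius value $\mu(d)$ being produced by M\"obius inversion over the divisor lattice of $d$. Feeding this into the power-sum expansion $\ch(Q_m) = \frac{1}{m!}\sum_{w \in \Sym_m}\chi_{Q_m}(w)\, p_{\lambda(w)}$, with $\lambda(w)$ the cycle type of $w$, and tallying permutations by cycle type, the sum collapses to $\ch(Q_m) = \frac{1}{m}\sum_{d \mid m}(-1)^{m+m/d}\mu(d)\, p_d^{m/d}$, which is exactly $\omega(L_m)$ since $\omega(p_d^{m/d}) = (-1)^{m - m/d}p_d^{m/d}$. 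Finally Brandt's theorem, recalled above, gives $\ch(\Wh_m) = L_m$, and $\ch(\epsilon \otimes V) = \omega(\ch V)$, so $\ch(Q_m) = \ch(\epsilon \otimes \Wh_m)$ and hence $Q_m \cong \epsilon \otimes \Wh_m$.

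The main obstacle is the explicit evaluation of $\mmu_{\Pi_m^w}(\Fine_m,\Coarse_m)$ -- both the vanishing for cycle types other than $d^{m/d}$ and the precise constant, with its sign, its power of $d$, and its divisor-M\"obius factor; the remaining steps are formal. An appealing alternative that bypasses this conjugacy-class computation is Klyachko's construction: for each cyclic order $c$ on $\{1,\ldots,m\}$ one writes down an explicit top-dimensional homology class $z_c \in \widetilde{H}_{m-3}(\Pi_m)$, verifies that the $z_c$ span $Q_m$, and matches the linear relations among them with the defining relations of $\epsilon \otimes \Ind_{C_m}^{\Sym_m}(\zeta)$, where $\zeta$ is a faithful character of the cyclic subgroup $C_m \subset \Sym_m$; since $\Ind_{C_m}^{\Sym_m}(\zeta) \cong \Wh_m$ is the classical description of the multilinear part of the free Lie algebra recalled above, this produces the isomorphism directly and makes the appearance of $L_m$ transparent through the $C_m$-action. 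I would present the shellability-plus-character route as the proof proper and offer Klyachko's construction as the conceptual explanation.
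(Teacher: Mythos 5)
The paper does not prove Theorem~\ref{thm:lattice}; it quotes it directly from Stanley~\cite{Sta82}, who built on Hanlon~\cite{Han81}, so there is no ``paper's own proof'' to compare against. Your proposal is a correct reconstruction of the standard argument. Cohen--Macaulayness (Bj\"orner) gives the vanishing for $i\neq m-3$ and the rank $(m-1)!$, and it reduces the character to $\chi_{Q_m}(w) = (-1)^{m-1}\mmu_{\Pi_m^w}(\Fine_m,\Coarse_m)$ via the Hopf trace formula; your observation that an order-preserving simplicial map fixes a chain setwise only if it fixes it vertexwise is exactly what makes the fixed complex coincide with the order complex of the fixed subposet, so Hall's theorem applies. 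The constants you quote for Hanlon's evaluation of $\mmu_{\Pi_m^w}$ then assemble correctly: the count of permutations of type $d^{m/d}$ is $m!/(d^{m/d}(m/d)!)$, and after substitution one gets $\ch(Q_m) = \frac{1}{m}\sum_{d\mid m}(-1)^{m+m/d}\mu(d)\,p_d^{m/d}$; since $(-1)^{m+m/d}=(-1)^{m-m/d}$ and $\omega(p_d^{m/d}) = (-1)^{m-m/d}p_d^{m/d}$, this is $\omega(L_m)$, which equals $\ch(\epsilon\otimes\Wh_m) = \omega(\ch\Wh_m)$ by Brandt's theorem, and over $\CC$ equal characters give isomorphic modules. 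The one item you are treating as a black box, and rightly flag as the genuine content, is Hanlon's formula for $\mmu_{\Pi_m^w}$: the vanishing outside cycle type $d^{m/d}$ and the factorization $\mmu_{\Pi_{m/d}}\cdot d^{m/d-1}\mu(d)$. A self-contained proof would need to carry out that analysis of the fixed subposet (equal-length cycles cluster, and the divisor lattice of $d$ produces the number-theoretic $\mu(d)$ by M\"obius inversion), rather than asserting it. Your sketched Klyachko-style alternative is also a legitimate known route, and has the advantage of producing the module isomorphism directly rather than through a character identity, though the character comparison already suffices.
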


We now give a plethystic formula for the expansion of $M^t_{\mu}$ into the $\SS^{\lambda} (\CC^t)$ basis.

\begin{theorem}
  In the representation ring $\Rep(\Sym_t)$, we have
  \begin{equation}
    [ M^t_{\mu} ] = \sum_{\lambda} (-1)^{|\mu| - |\lambda|} \langle s_{\mu^{T}} , s_{\lambda^{T}} [L_1 + L_2 + L_3 + \cdots ] \rangle [ \SS^{\lambda} (\CC^t)] .
    \label{e:M to S omega}
  \end{equation}
\end{theorem}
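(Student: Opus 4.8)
The plan is to play off the two expressions for the class of $D(m,t)$ in $\Rep(\Sym_{m}\times\Sym_{t})$ established just above, where $m=|\mu|$. On one hand, $D(m,t)\cong\bigoplus_{|\mu|=m}\Sp_{\mu}\boxtimes M_{\mu}^{t}$, so $M_{\mu}^{t}$ is the $\Sp_{\mu}$-isotypic component of $D(m,t)$; on the other, equivariant M\"obius inversion in the form of~\eqref{Distribute Mobius D} gives $[D(m,t)]=\sum_{|\nu|=m}\sum_{|\lambda|=\ell(\nu)}\bigl[\Ind_{G_{\nu}}^{\Sym_{m}}(\meq(\nu)\otimes\Sp_{\lambda})\boxtimes\SS^{\lambda}(\CC^{t})\bigr]$. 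Comparing $\Sp_{\mu}$-isotypic components, the coefficient of $[\SS^{\lambda}(\CC^{t})]$ in $[M_{\mu}^{t}]$ equals $\sum_{\nu}\langle\ch\,\Ind_{G_{\nu}}^{\Sym_{m}}(\meq(\nu)\otimes\Sp_{\lambda}),\,s_{\mu}\rangle$, the sum over integer partitions $\nu$ of $m$ with $\ell(\nu)=|\lambda|$. We have already shown, using that $\Pi_{m}$ is Cohen--Macaulay, that each $\meq(\nu)$ appearing here is $(-1)^{|\mu|-|\lambda|}$ times a genuine $G_{\nu}$-representation; so it remains only to compute the magnitude, and we may replace $\meq(\nu)$ by the honest representation $H_{\nu}:=\widetilde H_{|\mu|-|\lambda|-2}(\Delta(\Fine_{m},\pi(\nu)))$.

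The next step is to unwind $H_{\nu}\otimes\Sp_{\lambda}$ as a representation of $G_{\nu}=\prod_{j}\Wr(j,m_{j})$, where $\nu=1^{m_{1}}2^{m_{2}}\cdots$. By the Sundaram--Welker description~\eqref{SWFormula}, together with Theorem~\ref{thm:lattice} and Brandt's identity $L_{j}=\ch(\Wh_{j})$, we have $H_{\nu}\cong\bigotimes_{j}Q_{j}\,\S\,\epsilon_{j}$ with $\ch(Q_{j})=\ch(\epsilon\otimes\Wh_{j})=\omega L_{j}$, where $\epsilon_{j}$ is the trivial (for $j$ odd) or sign (for $j$ even) representation of $\Sym_{m_{j}}$. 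The action of $G_{\nu}$ on the $\ell(\nu)$ blocks of $\pi(\nu)$ factors through $\prod_{j}\Sym_{m_{j}}$, so by the Littlewood--Richardson rule $\Res_{G_{\nu}}\Sp_{\lambda}$ is the inflation of $\bigoplus_{(\lambda_{(j)})}c^{\lambda}_{\lambda_{(1)}\lambda_{(2)}\cdots}\bigotimes_{j}\Sp_{\lambda_{(j)}}$, and each $\Sp_{\lambda_{(j)}}$ equals $\One\,\S\,\Sp_{\lambda_{(j)}}$ as a $\Wr(j,m_{j})$-representation. Using the identity $(V\,\S\,W)\otimes(V'\,\S\,W')\cong(V\otimes V')\,\S\,(W\otimes W')$, then Theorem~\ref{WreathInductionPlethysm} to evaluate each $\Ind_{\Wr(j,m_{j})}^{\Sym_{jm_{j}}}$, and finally inducing from $\prod_{j}\Sym_{jm_{j}}$ up to $\Sym_{m}$ (which multiplies Frobenius characteristics), I obtain
\[
  \ch\,\Ind_{G_{\nu}}^{\Sym_{m}}(H_{\nu}\otimes\Sp_{\lambda})\;=\;\sum_{(\lambda_{(j)})}c^{\lambda}_{\lambda_{(1)}\lambda_{(2)}\cdots}\ \prod_{j}\ \omega^{\,j-1}(s_{\lambda_{(j)}})\bigl[\omega L_{j}\bigr].
\]

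The heart of the argument is the plethystic identity $\omega^{\,j-1}(s_{\rho})[\omega L_{j}]=\omega\bigl(s_{\rho^{T}}[L_{j}]\bigr)$, valid for every $j\ge1$ and every partition $\rho$. For $j$ odd one has $\omega L_{j}=L_{j}$ (all divisors of $j$ are odd, so $\omega$ fixes every term of~\eqref{e:lyndon}) and $\omega(g[L_{j}])=(\omega g)[L_{j}]$, while for $j$ even $\omega(g[L_{j}])=g[\omega L_{j}]$; both are quick checks on power sums using $p_{n}[p_{d}^{\,k}]=p_{nd}^{\,k}$ and $\omega(p_{r})=(-1)^{r-1}p_{r}$, and both are also the Frobenius-character reflection of the decomposition $\Res^{\Sym_{jm}}_{\Wr(j,m)}\epsilon\cong(\epsilon\,\S\,\epsilon^{\otimes j})$ of the sign representation interacting with the alternating $\epsilon_{j}$ pattern in~\eqref{SWFormula}. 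Granting it, I substitute into the display, apply transpose-invariance of Littlewood--Richardson coefficients $c^{\lambda}_{\lambda_{(1)}\cdots}=c^{\lambda^{T}}_{\lambda_{(1)}^{T}\cdots}$, and re-index the sum by $(\lambda_{(j)}^{T})$ to recognize $\sum_{(\rho_{(j)})}c^{\lambda^{T}}_{\rho_{(1)}\rho_{(2)}\cdots}\prod_{j}s_{\rho_{(j)}}[L_{j}]=s_{\lambda^{T}}[L_{1}+L_{2}+\cdots]$. Summing over all $\nu$ of $m$ with $\ell(\nu)=|\lambda|$ then gives $\sum_{\nu}\ch\,\Ind_{G_{\nu}}^{\Sym_{m}}(H_{\nu}\otimes\Sp_{\lambda})=\bigl[\omega\bigl(s_{\lambda^{T}}[L_{1}+L_{2}+\cdots]\bigr)\bigr]_{\deg m}$, and pairing with $s_{\mu}$ and using that $\omega$ is an isometry yields $\langle s_{\mu^{T}},\,s_{\lambda^{T}}[L_{1}+L_{2}+\cdots]\rangle$; combined with the sign $(-1)^{|\mu|-|\lambda|}$ coming from Cohen--Macaulayness, this is exactly~\eqref{e:M to S omega}. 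The main obstacle is precisely this identity and, more broadly, tracking how the $\epsilon_{j}$ twists of~\eqref{SWFormula}, the sign twist in Theorem~\ref{thm:lattice}, and the transposes in the target formula conspire; the rest is routine assembly of wreath-product induction, the $\S$-tensor rule, and Littlewood--Richardson.
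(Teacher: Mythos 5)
Your proposal is correct and follows essentially the same route as the paper: both start from comparing the two expressions for $[D(m,t)]$, invoke Sundaram--Welker and Stanley to identify $\ch(Q_j)=\omega L_j$, restrict $\Sp_\lambda$ via Littlewood--Richardson, induce through the wreath products by Theorem~\ref{WreathInductionPlethysm}, and then apply $\omega$ with the parity-sensitive plethysm rule to turn $\omega L_j$ into $L_j$ and produce the transposes. The only cosmetic difference is that you package the $\omega$-manipulation as the per-factor identity $\omega^{j-1}(s_\rho)[\omega L_j]=\omega(s_{\rho^T}[L_j])$ and then pull $\omega$ out of the product using that it is a ring homomorphism, whereas the paper applies $\omega$ once to the whole induced character; these are the same computation.
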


\begin{proof}
  Returning to Eq.~\eqref{Distribute Mobius D}, the action of $G_{\nu}$ on $\Sp_{\lambda}$ is via the quotient $G_{\nu} = \prod \Wr(j, m_j) \longrightarrow \prod \Sym_{m_j}$ followed by the inclusion $\prod \Sym_{m_j} \subset \Sym_{\ell}$. The restriction of $\Sp_{\lambda}$ to $\prod \Sym_{m_j}$ is given by Littlewood-Richardson coefficients, namely, in $\Rep(G_{\nu})$, we have
  \[ [\Sp_{\lambda}] = \sum_{|\mu(j)| = m_j} c_{\mu(1) \mu(2) \cdots \mu(r)}^{\lambda} [\Sp_{\mu(1)} \boxtimes \Sp_{\mu(2)} \boxtimes \cdots \boxtimes  \Sp_{\mu(r)} ]  . \]
  We emphasize that each $\mu_{(j)}$ is a partition. Combining this with Eq.~\eqref{SWFormula} gives
  \begin{multline}
    \meq(\nu) \otimes [\Sp_{\lambda}] = \\
    (-1)^{|\nu| - \ell(\nu) - 2} \sum_{|\mu(j)| = m_j} c_{\mu(1) \mu(2) \cdots \mu(r)}^{\lambda} [Q_1 \S \Sp_{\mu(1)} ] \otimes  [Q_2 \S (\epsilon \otimes \Sp_{\mu(2)}) ] \otimes [Q_3 \S \Sp_{\mu(3)} ]\cdots \\
=  (-1)^{|\nu| - \ell(\nu) - 2} \sum_{|\mu(j)| = m_j} c_{\mu(1) \mu(2) \cdots \mu(r)}^{\lambda} [Q_1 \S \Sp_{\mu(1)} ] \otimes  [Q_2 \S \Sp_{\mu(2)^T} ] \otimes [Q_3 \S \Sp_{\mu(3)} ] \cdots 
  \end{multline}
  We now induce from $G_{\nu}$ to $\Sym_m$ using Theorem~\ref{thm:Stanley-Wr}. Setting $q_j = \mathrm{ch}(Q_j)$ gives
  \begin{multline}
    \Ind_{G_{\nu}}^{\Sym_m} \left(\meq(\nu) \otimes [\Sp_{\lambda}]\right) = \\
    (-1)^{|\nu| - \ell(\nu)} \sum_{|\mu_{(j)}| = m_j} c_{\mu_{(1)} \mu_{(2)} \cdots \mu_{(r)}}^{\lambda} s_{\mu_{(1)}}[q_1] \cdot s_{\mu_{(2)}^{T}}[q_2] \cdot s_{\mu_{(3)}}[q_3] \cdots 
  \end{multline}
  Recall the plethysm property (see \cite[Chapter~I.8]{Mac95})
  \[ \omega\left( f[g] \right) = \left\{ \begin{array}{rl}
    f[\omega g] & \mathrm{deg}(g) \text{ even} , \\
    (\omega f)[\omega g] & \mathrm{deg}(g) \text{ odd} .
  \end{array} \right. \]
  Since $Q_j$ is a representation of $\Sym_j$, its character $q_j$ has degree $j$, and so
  \begin{multline}
    \omega \Ind_{G_{\nu}}^{\Sym_m} \left(\meq(\nu) \otimes [\Sp_{\lambda}]\right) = \\
    (-1)^{|\nu| - \ell(\nu)} \sum_{|\mu_{(j)}| = m_j} c_{\mu_{(1)} \mu_{(2)} \cdots \mu_{(r)}}^{\lambda} s_{\mu_{(1)}^{T}}[\omega q_1] \cdot s_{\mu_{(2)}^{T}}[\omega q_2] \cdot s_{\mu_{(3)}^{T}}[\omega q_3] \cdots
  \end{multline}
  Now we must sum over all partitions $\nu$ of size $m$. Recall that $|\lambda|=\ell(\nu)$. Since $\omega$ is an isometry, we have $c_{\mu,\nu}^{\lambda} = c_{\mu^{T},\nu^{T}}^{\lambda^{T}}$. Once again using the plethysm identity $s_{\mu}[f+g] = \sum_{\alpha,\beta} c_{\alpha,\beta}^{\mu} s_{\alpha}[f] s_{\beta}[g]$ (see \cite[I(8.8)]{Mac95}), we have
  \begin{multline}
    \sum_{|\nu|=m} \omega \Ind_{G_{\nu}}^{\Sym_m} \left(\meq(\nu) \otimes [\Sp_{\lambda}]\right) \\
    = \sum_{|\nu|=m} (-1)^{|\nu| - \ell(\nu)} \sum_{|\mu_{(j)}^{T}| = m_j} c_{\mu_{(1)}^{T} \mu_{(2)}^{T} \cdots \mu_{(r)}^{T}}^{\lambda^{T}} s_{\mu_{(1)}^{T}}[\omega q_1] \cdot s_{\mu_{(2)}^{T}}[\omega q_2] \cdot s_{\mu_{(3)}^{T}}[\omega q_3] \cdots \\
    = (-1)^{m - |\lambda|} \sum_{\sum |\mu_{(j)}| = |\lambda^{T}|} c_{\mu_{(1)} \mu_{(2)} \cdots \mu_{(r)}}^{\lambda^{T}} s_{\mu_{(1)}}[\omega q_1] \cdot s_{\mu_{(2)}}[\omega q_2] \cdot s_{\mu_{(3)}}[\omega q_3] \cdots \\
    = (-1)^{m - |\lambda|} s_{\lambda^{T}} [\omega q_1 + \omega q_2 + \omega q_3 + \cdots ] \\
    = (-1)^{m - |\lambda|} s_{\lambda^{T}} [L_1 + L_2 + L_3 + \cdots ] ,
  \end{multline}
  where the last equality follows from taking Frobenius characters in Theorem~\ref{thm:lattice}.
  
  Therefore the coefficient of $[\Sp_{\mu}]$ in $\sum_{\nu} \Ind_{G_{\nu}}^{\Sym_m} (\meq(\nu) \otimes [\Sp_{\lambda}])$ is the coefficient of $s_{\mu}$ in $\omega (-1)^{m - |\lambda|} s_{\lambda^{T}} [\omega q_1 + \omega q_2 + \omega q_3 + \cdots ]$, which is also the coefficient of $\omega s_{\mu} = s_{\mu^{T}}$ in $(-1)^{m - |\lambda|} s_{\lambda^{T}} [\omega q_1 + \omega q_2 + \omega q_3 + \cdots ]$. The theorem now follows from Eq.~\eqref{Schur Weyl for D}, in which $M_{\mu}^t$ can be identified with the $\Sp_{\mu}$-isotypic component of $D(m,t)$ where $m = |\mu|$, and Eq.~\eqref{Distribute Mobius D}, in which the coefficient of $[\SS_{\lambda}(\CC^t)]$ in $[M_{\mu}^t]$ is the coefficient of $[\Sp_{\mu}]$ in $\sum_{\nu} \Ind_{G_{\nu}}^{\Sym_m} (\meq(\nu) \otimes [\Sp_{\lambda}])$. 
\end{proof}

Thus we have proved the final part of Theorem~\ref{thm:MT FS}. Combining the second and fourth parts of Theorem~\ref{thm:MT FS} proves Theorem~\ref{thm:MT-plethysm}.

\section*{Acknowledgments}

The authors thank Jon Schneider for implementing the characters and change of basis matrices between them in Java, and John Wiltshire-Gordon for related insights into the category of representations of finite sets. We also thank Rosa Orellana and Mike Zabrocki for interesting conversations about the symmetric function approach to understanding stable Kronecker coefficients through the stable Specht functions, which they discovered independently.


\begin{table}[ht]
  \begin{displaymath}
    \renewcommand{\arraystretch}{1.25}
    \begin{array}{>{\scriptstyle}l@{\hskip 5pt}>{\scriptstyle}c@{\hskip 5pt}>{\scriptstyle}l}
      s_{(1)}         & = & 1+\sdag_{(1)} \\ 
      s_{(2)}         & = & 2+2\sdag_{(1)}+\sdag_{(2)} \\ 
      s_{(11)}       & = & \sdag_{(1)}+\sdag_{(11)} \\ 
      s_{(3)}         & = & 3+4\sdag_{(1)}+2\sdag_{(2)}+\sdag_{(11)}+\sdag_{(3)} \\ 
      s_{(21)}       & = & 1+3\sdag_{(1)}+2\sdag_{(2)}+2\sdag_{(11)}+\sdag_{(21)} \\ 
      s_{(111)}     & = & \sdag_{(11)}+\sdag_{(111)} \\ 
      s_{(4)}         & = & 5+7\sdag_{(1)}+5\sdag_{(2)}+2\sdag_{(11)}+2\sdag_{(3)}+\sdag_{(21)}+\sdag_{(4)} \\ 
      s_{(31)}       & = & 2+7\sdag_{(1)}+5\sdag_{(2)}+6\sdag_{(11)}+2\sdag_{(3)}+3\sdag_{(21)}+\sdag_{(111)}+\sdag_{(31)} \\ 
      s_{(22)}       & = & 2+3\sdag_{(1)}+4\sdag_{(2)}+\sdag_{(11)}+\sdag_{(3)}+2\sdag_{(21)}+\sdag_{(22)} \\ 
      s_{(211)}     & = & \sdag_{(1)}+\sdag_{(2)}+3\sdag_{(11)}+2\sdag_{(21)}+2\sdag_{(111)}+\sdag_{(211)} \\ 
      s_{(1111)}   & = & \sdag_{(111)}+\sdag_{(1111)} \\ 
      s_{(5)}         & = & 7+12\sdag_{(1)}+9\sdag_{(2)}+5\sdag_{(11)}+5\sdag_{(3)}+3\sdag_{(21)}+2\sdag_{(4)}+\sdag_{(31)}+\sdag_{(5)} \\ 
      s_{(41)}       & = & 5+14\sdag_{(1)}+13\sdag_{(2)}+12\sdag_{(11)}+6\sdag_{(3)}+9\sdag_{(21)}+3\sdag_{(111)}+2\sdag_{(4)}+3\sdag_{(31)}+\sdag_{(22)}+\sdag_{(211)}+\sdag_{(41)} \\ 
      s_{(32)}       & = & 4+10\sdag_{(1)}+11\sdag_{(2)}+8\sdag_{(11)}+6\sdag_{(3)}+8\sdag_{(21)}+2\sdag_{(111)}+\sdag_{(4)}+3\sdag_{(31)}+2\sdag_{(22)}+\sdag_{(211)}+\sdag_{(32)} \\ 
      s_{(311)}     & = & 3\sdag_{(1)}+4\sdag_{(2)}+8\sdag_{(11)}+\sdag_{(3)}+7\sdag_{(21)}+6\sdag_{(111)}+2\sdag_{(31)}+\sdag_{(22)}+3\sdag_{(211)}+\sdag_{(1111)}+\sdag_{(311)} \\ 
      s_{(221)}     & = & 1+3\sdag_{(1)}+4\sdag_{(2)}+3\sdag_{(11)}+2\sdag_{(3)}+5\sdag_{(21)}+\sdag_{(111)}+\sdag_{(31)}+2\sdag_{(22)}+2\sdag_{(211)}+\sdag_{(221)} \\ 
      s_{(2111)}   & = & \sdag_{(11)}+\sdag_{(21)}+3\sdag_{(111)}+2\sdag_{(211)}+2\sdag_{(1111)}+\sdag_{(2111)} \\ 
      s_{(11111)} & = & \sdag_{(1111)}+\sdag_{(11111)}
    \end{array}
  \end{displaymath}
  \caption{\label{tab:s2sdag}Schur functions expanded into the stable Specht basis.}
\end{table}

\begin{table}[ht]
  \begin{displaymath}
    \renewcommand{\arraystretch}{1.25}
    \begin{array}{>{\scriptstyle}l@{\hskip 5pt}>{\scriptstyle}c@{\hskip 5pt}>{\scriptstyle}l}
\sdag_{(1)} & = & s_{(1)}-1 \\ 
\sdag_{(2)} & = & s_{(2)}-2s_{(1)} \\ 
\sdag_{(1 1)} & = & s_{(11)}-s_{(1)}+1 \\ 
\sdag_{(3)} & = & s_{(3)}-2s_{(2)}-s_{(11)}+s_{(1)} \\ 
\sdag_{(2 1)} & = & s_{(21)}+3s_{(1)}-2s_{(2)}-2s_{(11)} \\ 
\sdag_{(1 1 1)} & = & s_{(111)}-s_{(11)}+s_{(1)}-1 \\ 
\sdag_{(4)} & = & s_{(4)}+s_{(2)}-2s_{(3)}-s_{(21)}+2s_{(11)} \\ 
\sdag_{(3 1)} & = & s_{(31)}-3s_{(1)}+5s_{(2)}+3s_{(11)}-2s_{(3)}-3s_{(21)}-s_{(111)} \\ 
\sdag_{(2 2)} & = & s_{(22)}-s_{(1)}+2s_{(2)}-s_{(3)}-2s_{(21)}+4s_{(11)} \\ 
\sdag_{(2 1 1)} & = & s_{(211)}-4s_{(1)}+3s_{(2)}+3s_{(11)}-2s_{(21)}-2s_{(111)} \\ 
\sdag_{(1 1 1 1)} & = & s_{(1111)}-s_{(111)}+s_{(11)}-s_{(1)}+1 \\ 
\sdag_{(5)} & = & s_{(5)}+s_{(3)}+2s_{(21)}-s_{(11)}-2s_{(4)}-s_{(31)}+s_{(111)} \\ 
\sdag_{(4 1)} & = & s_{(41)}+2s_{(1)}-5s_{(11)}+5s_{(3)}+6s_{(21)}-2s_{(4)}-3s_{(31)}-s_{(22)}-5s_{(2)}-s_{(211)}+2s_{(111)} \\ 
\sdag_{(3 2)} & = & s_{(32)}+3s_{(1)}-6s_{(2)}-6s_{(11)}+4s_{(3)}+8s_{(21)}+3s_{(111)}-s_{(4)}-3s_{(31)}-2s_{(22)}-s_{(211)} \\ 
\sdag_{(3 1 1)} & = & s_{(311)}+5s_{(1)}-7s_{(11)}+4s_{(3)}+7s_{(21)}+3s_{(111)}-2s_{(31)}-s_{(22)}-3s_{(211)}-9s_{(2)}-s_{(1111)} \\ 
\sdag_{(2 2 1)} & = & s_{(221)}+3s_{(1)}-7s_{(11)}+6s_{(21)}-5s_{(2)}-s_{(31)}-2s_{(22)}+2s_{(3)}-2s_{(211)}+4s_{(111)} \\ 
\sdag_{(2 1 1 1)} & = & s_{(2111)}-4s_{(11)}+5s_{(1)}+3s_{(21)}-4s_{(2)}+3s_{(111)}-2s_{(211)}-2s_{(1111)} \\ 
\sdag_{(1 1 1 1 1)} & = & s_{(11111)}-s_{(1111)}+s_{(111)}-s_{(11)}+s_{(1)}-1 
    \end{array}
  \end{displaymath}
  \caption{\label{tab:sdag2s}Stable Specht functions expanded into the Schur basis.}
\end{table}

%
%

\bibliographystyle{plain} 
\bibliography{alt}

\end{document}